\newtheorem{lemma}{Lemma}[section]
\newtheorem{theorem}[lemma]{Theorem}
\newtheorem{corollary}[lemma]{Corollary}
\newtheorem{rem}[lemma]{Remark}
\theoremstyle{definition}
\newtheorem{definition}[lemma]{Definition}
\def\div{\mathop{\mathrm{div}}}
\newcommand{\N}{{\mathbb N}}
\newcommand{\R}{{\mathbb R}}
\newcommand{\Om}{\Omega}
\newcommand{\rg}{\rightarrow}
\def\bean#1\eean{\begin{eqnarray*}#1\end{eqnarray*}}
\begin{document}

\title{On the pure critical exponent problem for the $p$-Laplacian}

\author{ Carlo Mercuri%\\ Department of Mathematics and Computer Science
%\\
%Technische Universiteit Eindhoven
%\\
%Postbus 513,
%5600 MB Eindhoven
%\\
%The Netherlands
\thanks{ Institute for Complex Molecular Systems and Department of Mathematics and Computer Science,
Technische Universiteit Eindhoven,
Postbus 513,
5600 MB Eindhoven,
The Netherlands.  E-mail: c.mercuri@tue.nl} \and Filomena Pacella %\\ Dipartimento di Matematica Guido Castelnuovo
%\\
%Universit\`a di Roma La Sapienza
%\\ P.le A. Moro 2
%00185 Rome\\Italy
\thanks{ Dipartimento di Matematica Guido Castelnuovo
,
Universit\`a di Roma "Sapienza"
, P.le A. Moro 2
00185 Rome, Italy. E-mail: pacella@mat.uniroma1.it} }

\date{}

\maketitle

\begin{abstract}
 In this paper we prove existence and multiplicity of positive and sign-changing solutions to the pure critical exponent problem for the $p$-Laplacian operator with Dirichlet boundary conditions on a bounded domain having nontrivial topology and discrete symmetry. Pioneering works related to the case $p=2$ are H. Brezis and L. Nirenberg \cite{BN}, J.-M. Coron \cite{coron}, and A. Bahri and J.-M. Coron \cite{BCoron}. A global compactness analysis is given for the Palais-Smale sequences in the presence of symmetries.
\\

{\sc Keywords:} Critical Sobolev exponent, $p$-Laplacian, Palais-Smale sequences, lack of compactness, sign-changing solutions. \\
\\
{\sc Mathematics Subject Classification (2010): 35J20, 35J66, 35J92}
\end{abstract}

%--------------------------------------------------------------------------

\section{Introduction}
We tackle the following problem with pure critical nonlinearity

\begin{equation}\label{main eq}
\left\{
\begin{array}{lll}
-\Delta_p u = |u|^{p^*-2}u  \quad &\mathrm{in}& \,\,\Omega   \\

  u=0 &\mathrm{on}& \,\, \partial \Omega,
\end{array}
\right.
\end{equation}
where  $\Omega$ is a smooth bounded domain in $\R^N,$ $1<p<N,$ $p^*:=N p/(N-p)$ is the critical Sobolev exponent, $\Delta_p u:=\div(|\nabla u|^{p-2}\nabla u)$ is the $p$-Laplace operator defined on $$\mathcal D^{1,p}(\mathbb R^N):=\{u\in L^{p^*}(\R^N):\nabla u\in L^p(\mathbb R^N;\mathbb R^N)\}$$ endowed with the norm $$||u|| :=||\nabla u||_{L^p(\mathbb R^N)}.$$\\
We denote by $ W_0^{1,p}(\Omega)$ the closure of $\mathcal D (\Omega)$ in $\mathcal D^{1,p}(\R^N)$ and define on $W_0^{1,p}(\Omega)$ the functional

\begin{equation}\label{J}
J(u):=\frac{1}{p}\int_{\Omega}|\nabla u|^p dx-\frac{1}{p^*} \int_{\Omega}|u|^{p^*}dx.
\end{equation}
We recall the definition of Nehari manifold

\begin{equation*}
\mathcal N(\Omega):=\{u\in W^{1,p}_0(\Omega)\setminus\{0\}\,:\, (J'(u),u)=0\}.
\end{equation*}
We also define the level
\begin{equation}\label{best}
c_\infty:=\inf\{J(u),\, u\in \mathcal N (\Omega)\}=\frac{S^{N/p}}{N},
\end{equation}
where
\begin{equation}\label{bbest}
S:=\inf\{\int_{\R^N}|\nabla u|^p dx,\, u\in W^{1,p}(\R^N)\, : \, \int_{\R^N}|u|^{p^*} dx =1\}
\end{equation}
is the best Sobolev constant, attained by nowhere zero (well known) functions in $\R^N,$ see e.g.\cite{T}.
It is well known that the infimum in (\ref{best}) does not depend on the domain. \\ Since the embedding of $W^{1,p}_0(\Omega)$ into $L^{p^*}(\Omega)$ is not compact, the functional $J$ does not satisfy the classical Palais-Smale condition  and this, in turns, does not allow to solve (\ref{main eq}) by standard variational methods. \\
In the case $p=2$ it is well known that the existence of solutions depends on the domain. Pohozaev's identity \cite{poh} together with the unique continuation principle (see e.g. \cite{Hormander}) implies that problem (\ref{main eq}) does not have a nontrivial solution (neither positive, nor sign-changing) if $\Omega$ is strictly starshaped. On the other hand, if the domain is an annulus $A$ the existence of a radial positive solution is provided, for any $p\in (1,N),$ by direct minimization methods, as a consenquence of the compactness of the embedding $W^{1,p}_{0, rad}(A) \hookrightarrow L^{p^*}(A).$\\
These two positive and negative results motivated the study of (\ref{main eq}) in topologically nontrivial domains in order to get existence of solutions. In this direction, in the case $p=2$,  the two main results are the one of Coron \cite{coron} for annular shaped domains with a small hole and the one of Bahri-Coron \cite{bahricoron} for domains with nontrivial homology.\\
However results of \cite{Dancer}, \cite{ding} and \cite{Pass} show that hypotheses of nontrivial topology are not necessary to get solutions, in the case $p=2.$ So the question of characterizing the domains for which a solution of (\ref{main eq}) exists is still open, even in the case $p=2.$\\
Concerning multiplicity of positive and sign-changing solutions several results have been obtained (case $p=2$) for domains which are small perturbations of a given domain $D,$ in particular if $D$ is a domain with a small hole, see e.g. \cite{ClappGrossiPistoia,clappmussopistoia,GrossiPacella,Li,MarchiPacella,MussoPistoia}. But for domains which are not such perturbations multiplicity remains largely open. A first result in this direction was recently established in \cite{clapppacella}, under some symmetry assumptions, always in the case $p=2.$ \\
Coming to the case $p\neq2$ let us start observing that a Pohozaev type nonexistence result is not yet available for sign-changing solutions of (\ref{main eq}), as the unique continuation principle for the $p$-Laplacian is not known, see e.g. \cite{mercuriwillem}, while for nonnegative solutions has been proved in \cite{GV}. The nonexistence of sign-changing radial solutions in the case $\Omega:=\{x\in\R^N\,:\, |x|<1\}$ holds in the range $2N/(N+2)\leq p\leq 2,$ as observed in \cite{mercuriwillem} by an ODE argument.\\
As far as existence of solutions is concerned there are no results at all, except for the above mentioned one concerning radial solutions in the annulus. This is due to serious difficulties arising in dealing with the quasilinear case. Let us outline the main ones.
A first obstacle is given by the fact that even the positive solutions of (\ref{main eq}) on the whole $\R^N$ are not classified, except for the optimizers of the Sobolev constant.
In the case $p=2$ they are explicitly computed (see \cite{GS}) and this is the core of many existence results, see e.g. \cite{bahricoron} and \cite{coron}.
Moreover the nonexistence of solutions in the half-space which holds for $p=2$ is another major problem in extending the compactness result of \cite{St}, (see \cite{mercuriwillem} and \cite{mercurisquass}), which is used extensively in the semilinear case.
Finally the Lyapunov-Schmidt reduction method, largely exploited in perturbation results when $p=2,$ does not seem applicable because the study of the linearized operator of the $p$-Laplacian is another major question (see e.g. \cite{AftPac,DaSc}).\\
So the natural question is whether some topological and/or geometrical assumptions on the domain could still lead to existence results as for the case $p=2,$ in spite of the above mentioned difficulties.\\
In the present paper we impose some symmetry on the domain which allows, with an extension of the recent global compactness theorem of \cite{mercuriwillem} to get, as in \cite{clapppacella}, existence and multiplicity of positive and sign-changing solutions.\\
We obtain two kind of results. The first one is a Coron type result, i.e. we consider domains with a small hole. In this case we impose little symmetry and get both existence and multiplicity of positive and sign-changing solutions. The second type of results is obtained for domains with a hole of any size but assuming more symmetry. In both cases we follow the same approach in \cite{clapppacella}. \\
So we assume that $\Omega$ is annular shaped, i.e. $0\notin \bar{\Omega}$ and contains $ A_{R_1,R_2}:=\{x\in\mathbb R^N\,:\,R_1<|x|<R_2\},$ and in addition, $\Omega$ is invariant under the action of a closed subgroup $G$ of orthogonal transformations of $ \mathbb R^N.$\\
Hereafter, given a set $A$ of functions and a subgroup $G$ of linear isometries of $\R^N,$ we will denote by $A^G$ the subset of $A$ given by $G$-symmetric functions. Furthermore, define the level
\begin{equation}\label{neh}
c(R_1,R_2):=\inf\{J(u)|\, u\in \mathcal N (A_{R_1,R_2})^{O(N)}\}.
\end{equation}
Finally, define the cardinality of the minimal $G$-orbit in $ \R^N\setminus \{0\}$ \begin{equation}\label{minimal}l=l(G)=\min \{\# G x\, :\, x\in \R^N\setminus \{0\}\}.\end{equation}
The following theorem provides a positive solution for a given symmetry closed subgroup of $O(N),$ by shrinking the size of the hole.
\begin{theorem}\label{coronn}
Let $1<p<N$ and $l(G)\geq2.$ Then, for every  $\delta>0$ there exists $R_\delta>0$ such that equation (\ref{main eq}) possesses a positive $G$-symmetric solution $u$ provided $R_1/R_2<R_\delta.$ Furthermore $$J(u)\leq\frac{S^{N/p}}{N}+\delta.$$
\end{theorem}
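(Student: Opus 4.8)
The plan is to realise the solution by direct minimisation of $J$ on the $G$-symmetric Nehari manifold $\mathcal N(\Omega)^G$, using the symmetry to raise the energy threshold below which compactness holds. First I would invoke the principle of symmetric criticality: since $J$ is $G$-invariant and $G$ acts by isometries on $W^{1,p}_0(\Omega)$, a critical point of the restriction $J|_{W^{1,p}_0(\Omega)^G}$ is a genuine critical point of $J$, hence a weak solution of (\ref{main eq}); moreover $\mathcal N(\Omega)^G$ is a natural constraint, so it suffices to show that $c^*:=\inf\{J(u)\,:\,u\in\mathcal N(\Omega)^G\}$ is attained. To obtain a \emph{positive} solution I would work throughout with the functional $J_+$ in which $|u|^{p^*}$ is replaced by $(u^+)^{p^*}$; a critical point $u$ of $J_+$ satisfies $u^-=0$ after testing with $u^-$, so $u\ge0$, and then $u>0$ by the strong maximum principle for the $p$-Laplacian (V\'azquez).

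The two quantitative ingredients are an upper and a lower bound for $c^*$. For the upper bound, since $A_{R_1,R_2}\subset\Omega$, the radial positive solution of (\ref{main eq}) on the annulus, extended by zero, is an $O(N)$-symmetric (hence $G$-symmetric) element of $\mathcal N(\Omega)^G$ of energy $c(R_1,R_2)$, so $c^*\le c(R_1,R_2)$. I would then use the scale invariance $c(R_1,R_2)=c(R_1/R_2,1)$ of the critical problem together with the fact that the radial minimisers concentrate as the relative size of the hole vanishes to conclude $c(R_1,R_2)\to c_\infty$ as $R_1/R_2\to0$; this produces $R_\delta$ with $c^*\le c_\infty+\delta$ whenever $R_1/R_2<R_\delta$. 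For the lower bound, the inclusion $\mathcal N(\Omega)^G\subset\mathcal N(\Omega)$ gives $c^*\ge c_\infty$ at once.

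The decisive step is compactness. Here I would apply the $G$-symmetric version of the global compactness theorem (the extension of \cite{mercuriwillem} announced in the introduction): any Palais--Smale sequence for $J$ in $W^{1,p}_0(\Omega)^G$ decomposes, along a subsequence, into a weak-limit solution plus finitely many rescaled bubbles whose concentration points form $G$-orbits. Because $0\notin\bar\Omega$ and $l(G)\ge2$, no orbit meeting $\bar\Omega$ is a single point, so each surviving bubble carries its entire orbit and contributes at least $l(G)\,c_\infty\ge2c_\infty$ to the energy; consequently the symmetric Palais--Smale condition holds at every level in $(0,2c_\infty)$. Choosing $\delta<c_\infty$, a minimising sequence for $c^*$, made Palais--Smale by Ekeland's principle, sits at the level $c^*\in[c_\infty,c_\infty+\delta]\subset[c_\infty,2c_\infty)$. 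The value $c^*=c_\infty$ is excluded, for otherwise the (now convergent) minimising sequence would produce a minimiser of $J$ over all of $\mathcal N(\Omega)$, i.e.\ would realise the best Sobolev constant $S$ on the bounded domain $\Omega$, which is impossible. Hence $c^*\in(c_\infty,2c_\infty)$, the sequence converges, and its limit is the desired positive $G$-symmetric solution $u$ with $J(u)=c^*\le c_\infty+\delta$.

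The main obstacle is precisely the symmetric compactness analysis. Unlike the case $p=2$, neither a classification of the entire positive solutions beyond the Sobolev optimisers nor a nonexistence result on a half-space is available, so the bookkeeping of bubble energies, and the exclusion of any concentration not accompanied by a full $G$-orbit, must be established directly. This is exactly where the hypotheses $0\notin\bar\Omega$ and $l(G)\ge2$ are used: they guarantee that the first noncompactness level in the symmetric space is at least $2c_\infty$, hence strictly above the minimisation level $c^*$, which is what makes the direct minimisation succeed.
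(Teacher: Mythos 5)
Your proposal is sound and reaches the theorem, but by a genuinely different route than the paper. The paper does not minimize on the Nehari manifold via Ekeland; it runs a mountain-pass argument: since $J(|u|)=J(u)$, the minimax over paths can be restricted to paths lying in the positive cone $\mathcal P$, and the quantitative form of the minimax principle (Willem) produces a Palais--Smale sequence $\{u_n\}_n$ at the level $c=\inf_{\mathcal N(\Omega)^G}J$ with $\mathrm{dist}(u_n,\gamma_n([0,1]))<1/n$, hence $\|(u_n)_-\|_{L^{p^*}(\Omega)}\to0$; this is exactly the hypothesis of Theorem \ref{GlobalPositive} (PS sequences near the positive cone), whose bubbles are nonnegative entire solutions, and compactness below $l\,c_\infty$ (Corollary \ref{PSJc}, part i)) then closes the argument. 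Your route --- truncate to $J_+$, minimize on the Nehari set, apply Ekeland --- achieves the same normalization of the PS sequence by a different device, and both approaches rest on the same two pillars: the calibration $c\le c(R_1,R_2)\le c_\infty+\delta$ (Lemma \ref{homog}) and the orbit-counting lower bound $\ge l\,c_\infty\ge 2c_\infty$ for the energy of any bubble, which uses $0\notin\bar\Omega$. What the paper's route buys is that its compactness theorem applies verbatim; in your route you must additionally check (i) that the Nehari set is a natural constraint for $J_+$, so that Ekeland's constrained PS sequence becomes a free one, and (ii) that a bounded PS sequence for $J_+$ automatically satisfies $\|u_n^-\|\to0$ (test $J_+'(u_n)$ with $u_n^-$), so that it is a PS sequence for $J$ fitting the hypotheses of Theorem \ref{GlobalPositive}. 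Both are short standard verifications, but they are the glue your sketch omits.

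Two smaller remarks. First, your exclusion of $c^*=c_\infty$ is superfluous: with $l\ge2$ the symmetric PS condition holds at the level $c_\infty$ itself (any bubble already costs at least $2c_\infty$), so the minimizing sequence converges in every case and its limit is the desired solution; you are importing the logic of the non-symmetric Coron argument, where PS genuinely fails at $c_\infty$, into a setting where it is not needed (your non-attainment argument for $S$ on a bounded domain is correct, just not required). Second, your closing claim that no nonexistence result on a half-space is available is inaccurate as stated: for \emph{nonnegative} solutions it is available ([Theorem 1.1, \cite{mercuriwillem}]) and is precisely what the paper uses to exclude boundary bubbles in the proof of Theorem \ref{GlobalPositive}; only the sign-changing case is open, and that is irrelevant here since your bubbles are controlled by energy and orbit counting anyway.
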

The existence of a positive solution for a fixed hole size is given by the following
\begin{theorem}\label{fixhol}
Let $1<p<N$ and $0<R_1<R_2$ given. Then, there exists a positive $l_0$ depending on $p$ and $R_1/R_2$ such that for every closed subgroup $G\subset O(N)$ with $l(G)>l_0$ equation  (\ref{main eq}) possesses a positive $G$-symmetric solution, with $$J(u)\leq c(R_1,R_2).$$
\end{theorem}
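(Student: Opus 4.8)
The plan is to realize the solution as a minimizer of $J$ over the $G$-symmetric Nehari manifold and to recover the missing compactness from the symmetry. By the principle of symmetric criticality it suffices to produce a critical point of the restriction of $J$ to $W_0^{1,p}(\Omega)^G$, and on this subspace the natural candidate level is
$$ c^G(\Omega):=\inf\{J(u)\,:\,u\in\mathcal N(\Omega)^G\}. $$
First I would record the elementary upper bound $c^G(\Omega)\le c(R_1,R_2)$: the $O(N)$-invariant minimizer realizing $c(R_1,R_2)$ on $A_{R_1,R_2}$, extended by $0$ to $\Omega\supset A_{R_1,R_2}$, is $G$-symmetric for every closed $G\subset O(N)$ and lies in $\mathcal N(\Omega)^G$, so testing with it gives the inequality.

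The heart of the argument is a compactness threshold. Here I would invoke the $G$-symmetric version of the global compactness theorem of \cite{mercuriwillem}: a $G$-symmetric Palais--Smale sequence at level $c$ either converges strongly or splits off at least one nontrivial rescaled bubble. By $G$-invariance such a bubble cannot sit at an isolated point, since its concentration point generates a whole $G$-orbit and a copy of the bubble must concentrate at every point of that orbit. Since $0\notin\overline\Omega$, functions in $W_0^{1,p}(\Omega)$ vanish near the origin, so concentration at $0$ is excluded and the orbit is nontrivial, of cardinality at least $l(G)$. As each individual bubble carries energy at least $c_\infty=S^{N/p}/N$, the least energy of the limit problem, the energy lost in any noncompact splitting is at least $l(G)\,c_\infty$. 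Hence the Palais--Smale condition holds on $\mathcal N(\Omega)^G$ for every level $c<l(G)\,c_\infty$.

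It remains to match the two estimates through a suitable choice of $l_0$. Setting
$$ l_0:=\frac{c(R_1,R_2)}{c_\infty}=\frac{N\,c(R_1,R_2)}{S^{N/p}}, $$
which depends only on $p$, on $N$, and, by the scaling invariance of the critical problem, on the ratio $R_1/R_2$, the hypothesis $l(G)>l_0$ yields $c^G(\Omega)\le c(R_1,R_2)<l(G)\,c_\infty$. Consequently a minimizing Palais--Smale sequence for $c^G(\Omega)$, produced by Ekeland's principle on the manifold $\mathcal N(\Omega)^G$, is relatively compact, and $c^G(\Omega)$ is attained by some $u\in\mathcal N(\Omega)^G$. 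Being a minimizer on the natural constraint, $u$ is a free critical point of $J$, hence a weak solution of (\ref{main eq}) with $J(u)=c^G(\Omega)\le c(R_1,R_2)$. Finally, since $|u|\in\mathcal N(\Omega)^G$ and $J(|u|)=J(u)$, the minimizer may be taken nonnegative, and the strong maximum principle for the $p$-Laplacian upgrades it to a strictly positive solution.

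The main obstacle is the symmetric global compactness theorem itself and, inside it, the energy quantization along $G$-orbits: one must show that a lost bubble forces an entire orbit of bubbles and that each contributes at least $c_\infty$, while controlling possible half-space concentration profiles whose nonexistence --- known for $p=2$ --- is open for general $p$. Because such profiles could only raise the compactness threshold, they do not affect the bound $l(G)\,c_\infty$; the real work lies in the careful bookkeeping of the decomposition, namely distinguishing boundary from interior concentration and exploiting the vanishing of energy near the origin.
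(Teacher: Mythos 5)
Your proposal is correct, and its skeleton coincides with the paper's: the upper bound $\inf_{\mathcal N(\Omega)^G}J\le c(R_1,R_2)$ obtained by extending the radial annulus minimizer by zero, the compactness threshold $l(G)\,c_\infty$ coming from the symmetric global compactness theorem (orbit multiplicity $|G/\Gamma_i|\ge l(G)$ times a per-bubble energy $\ge c_\infty$), and the same choice $l_0=c_\infty^{-1}c(R_1,R_2)$. Where you genuinely diverge is in the machinery producing the critical point. The paper runs the quantitative minimax principle restricted to curves in the positive cone (legitimate since $J(|u|)=J(u)$), so that its Palais--Smale sequence satisfies $\|(u_n)_-\|_{L^{p^*}(\Omega)}\to 0$, and then applies Theorem \ref{GlobalPositive}, whose profiles are nonnegative entire solutions, half-space bubbles having been excluded by the Liouville theorem for nonnegative solutions [Theorem 1.1, \cite{mercuriwillem}]; the limit is thus nonnegative from the outset. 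You instead run Ekeland's principle on $\mathcal N(\Omega)^G$, invoke the general sign-changing decomposition (Corollary \ref{PSJc}~i), built on Theorem \ref{GlobalSign}), and restore positivity a posteriori: $|u|$ is again a minimizer on the natural constraint, hence a critical point, and V\'azquez's strong maximum principle applies. Your observation that possible half-space profiles cannot lower the threshold, since they too carry energy at least $c_\infty$, is exactly what makes this shortcut legitimate, and it is in fact the content of the paper's proof of Corollary \ref{PSJc}~i). What your route buys is economy: no need for the positive-cone decomposition theorem nor for the near-cone minimax construction; one can even note that the minimizer, once known to be a free critical point, cannot change sign, since testing $J'(u)=0$ with $u_\pm$ would place both $u_+$ and $u_-$ in $\mathcal N(\Omega)^G$ and force $J(u)\ge 2\inf_{\mathcal N(\Omega)^G}J$. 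What the paper's route buys is uniformity: the positive-cone machinery is precisely what its Coron-type Theorem \ref{coronn} requires, so the proof of Theorem \ref{fixhol} reduces there to a one-line reference to the preceding argument.
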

Moreover,  in the spirit of Theorem 1.2 and Theorem 1.3 of \cite{clapppacella}, the following theorems provide existence of multiple sign-changing solutions.
\begin{theorem}[Fixed symmetries-small hole]\label{coronn2}
Let $1<p<N$ and $l(G)\geq2.$ Then, for every  $\delta>0$ there exists $R_\delta>0$ such that equation (\ref{main eq}) possesses $l$ pairs of sign-changing $G$-symmetric solutions $\pm u_1,..., \pm u_l$ provided $R_1/R_2<R_\delta.$ Furthermore $$J(u_k)\leq(k+1)\frac{S^{N/p}}{N}+\delta, \quad k=1,...,l$$
\end{theorem}

\begin{theorem}[Fixed hole-more symmetries]\label{fixhol2}
Let $1<p<N,$ $m\in \mathbb N$ and $0<R_1<R_2$ given. Then, there exists a positive $l_0$ depending on $p,m$ and $R_1/R_2$ such that for every closed subgroup $G\subset O(N)$ with $l(G)>l_0$ equation  (\ref{main eq}) possesses $m$ pairs of sign-changing $G$-symmetric solutions $\pm u_1,..., \pm u_m,$  with $$J(u_k)\leq (k+1)c(R_1^{\frac{1}{m+1}},R_2^{\frac{1}{m+1}}), \quad k=1,...,m.$$
\end{theorem}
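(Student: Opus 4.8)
The plan is to carry out, inside the $G$-symmetric space $W_0^{1,p}(\Omega)^G$, a $\mathbb Z/2$-symmetric min--max scheme in the spirit of \cite{clapppacella}, replacing the Lyapunov--Schmidt reduction (unavailable for $\Delta_p$) by a topological index argument, and using the symmetric global compactness theorem (the extension of \cite{mercuriwillem}) to force the Palais--Smale condition to hold on the whole range of energies we need. The guiding idea is that large symmetry, rather than a small hole, is what buys compactness here.

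First I would exploit the scale invariance of $J$ and of (\ref{main eq}): the map $u\mapsto\lambda^{(N-p)/p}u(\lambda\,\cdot)$ preserves $J$ and carries solutions on $A_{R_1,R_2}$ to solutions on $A_{R_1/\lambda,R_2/\lambda}$, so $c(R_1,R_2)$ depends only on the ratio $R_2/R_1$. Put $c^*:=c(R_1^{1/(m+1)},R_2^{1/(m+1)})$ and subdivide geometrically: with $\rho_j:=R_1^{(m+1-j)/(m+1)}R_2^{j/(m+1)}$ for $j=0,\dots,m+1$, the $m+1$ disjoint shells $A_j:=A_{\rho_{j-1},\rho_j}\subset A_{R_1,R_2}\subset\Omega$ all have ratio $(R_2/R_1)^{1/(m+1)}$, so $c(\rho_{j-1},\rho_j)=c^*$ for every $j$. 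Let $w_j\in\mathcal N(A_j)^{O(N)}$ be the radial least-energy solution in $A_j$ (attained by compactness of the radial embedding), extended by $0$ to $\Omega$; the $w_j$ are then $G$-symmetric, have pairwise disjoint supports, and satisfy $J(w_j)=c^*$.

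Next I would pin down the compactness range. Since $0\notin\overline{\Omega}$, every concentration point of a $G$-symmetric Palais--Smale sequence is nonzero and hence sits on a $G$-orbit of at least $l(G)$ points; by the symmetric global compactness theorem each noncompact symmetric bubble-orbit therefore carries energy at least $l(G)\,S^{N/p}/N$. Consequently $J$ restricted to $W_0^{1,p}(\Omega)^G$ satisfies $(PS)_c$ for every $c<l(G)\,S^{N/p}/N$. Choosing $l_0:=N(m+1)c^*/S^{N/p}$, the hypothesis $l(G)>l_0$ yields $(m+1)c^*<l(G)\,S^{N/p}/N$, so $(PS)_c$ holds throughout the interval $(0,(m+1)c^*]$ in which all our min--max values will lie; note $l_0$ depends only on $p$ (through $S$), on $m$, and on $R_1/R_2$ (through $c^*$), as claimed. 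On $\mathcal N(\Omega)^G$ with the antipodal action $u\mapsto-u$ I then set
\[
c_j:=\inf_{\gamma(A)\geq j}\ \sup_{u\in A}J(u),
\]
with $\gamma$ the Krasnoselskii genus and $A$ ranging over compact symmetric subsets. The $(PS)$ information makes the symmetric deformation lemma applicable, so each $c_j\in(0,(m+1)c^*]$ is a critical value, with the usual genus multiplicity when values coincide. To bound them I use the test sets $\Sigma_{k+1}:=\{\,t(s)\sum_{j=1}^{k+1}s_jw_j:\ s\in S^{k}\,\}$, with $t(s)$ the Nehari rescaling; disjointness of supports and $J(w_j)=c^*$ give, on $\mathcal N$, the value $c^*\,(\sum|s_j|^{p})^{N/p}(\sum|s_j|^{p^*})^{-(N-p)/p}$, and the power-mean inequality (using $1-p/p^*=p/N$) gives $\sup_{\Sigma_{k+1}}J=(k+1)c^*$. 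As $\Sigma_{k+1}$ is the odd image of $S^{k}$ it has genus $k+1$, whence $c_{k+1}\leq(k+1)c^*$; taking $u_k$ a critical point at level $c_{k+1}$ for $k=1,\dots,m$ produces $J(u_k)\leq(k+1)c^*$, and distinct genus levels (or genus jumps at a shared value) give at least $m$ geometrically distinct pairs $\pm u_k$.

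Finally, the sign-changing property: each $u_k$ with $k\geq1$ lives at a genus level $\geq2$, while the cone $\mathcal N(\Omega)^G_{+}$ of nonnegative symmetric functions together with $-\mathcal N(\Omega)^G_{+}$ forms a symmetric set of genus $1$; comparing the cohomological index of the relevant sublevel sets with that of this cone, exactly as in \cite{clapppacella}, forces $u_k$ to change sign. I expect the main obstacles to be precisely this last step together with the quantization claim $l(G)\,S^{N/p}/N$ underpinning Step three: because half-space nonexistence and the fine analysis of the linearized $p$-Laplacian are not available for $p\neq2$, the sign change cannot be read off from a Morse index and must be extracted purely topologically, and one must verify within the symmetric compactness theorem that no boundary concentration lowers the energy threshold below $l(G)\,S^{N/p}/N$.
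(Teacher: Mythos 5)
Your skeleton coincides with the paper's proof almost step for step: the subdivision into $m+1$ shells of equal ratio carrying radial Nehari minimizers of common energy $c^*=c(R_1^{1/(m+1)},R_2^{1/(m+1)})$ is Lemma \ref{cal} (via Lemma \ref{homog}); your $l_0=N(m+1)c^*/S^{N/p}$ is literally the paper's $l_0:=c_\infty^{-1}(m+1)c(R_1^{1/(m+1)},R_2^{1/(m+1)})$; the compactness threshold $l(G)\,c_\infty$ is Corollary \ref{PSJc}~$i)$; and your computation $\sup_{\Sigma_{k+1}}J=(k+1)c^*$ is the paper's bound $\max_{W_k}J\leq\sum_i\max_{t>0}J(t\omega_i)$ for $W_k=\mathrm{span}(\omega_1,\dots,\omega_{k+1})$. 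The boundary-concentration worry you flag is in fact settled inside the paper's Corollary \ref{PSJc}: a nontrivial solution on a half-space, tested with its positive and negative parts and combined with the Sobolev inequality, still satisfies $\phi_\infty(v_i)\geq c_\infty$, so half-space bubbles are no cheaper than interior ones and the quantization at $l\,c_\infty$ stands; moreover, since all your levels lie strictly below $l\,c_\infty$, the absolute $(PS)_c$ of Corollary \ref{PSJc}~$i)$ already implies the relative condition needed for the sign-changing principle.

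The genuine gap is in your last step. You run a plain Krasnoselskii-genus min--max, $c_j:=\inf_{\gamma(A)\geq j}\sup_A J$, obtain critical points, and then try to conclude \emph{a posteriori} that those at levels $c_j$, $j\geq 2$, change sign because $\mathcal P\cup(-\mathcal P)$ has genus $1$. That implication fails: a genus-$2$ min--max level is merely some critical value, and nothing in the unconstrained deformation argument prevents the entire critical set at that level from consisting of constant-sign functions (e.g.\ a second pair of positive solutions); the genus of test sets gives no control on where the produced critical point sits relative to the cones. To force sign change the cones must be built into the scheme from the outset: one needs a pseudo-gradient flow under which suitable uniform neighborhoods of $\mathcal P$ and $-\mathcal P$ are positively invariant, and one min--maxes over classes that cannot be deformed into those neighborhoods, using $(PS)_c$ relative to $\Pi_0$. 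This reorganized, cone-relative scheme is precisely what the paper encapsulates in Lemma \ref{mmm} (Theorem 3.7 of \cite{clapppacella}, transplanted to the Banach space $W_0^{1,p}(\Omega)^G$ via a pseudo-gradient flow --- a nontrivial point for $p\neq 2$, since the duality map is no longer linear), applied to the subspaces $W_k$. Your argument becomes correct once your third step is replaced by, or recognized as, that relative construction; as literally written, the post-hoc genus comparison is the step that would fail.
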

As far as we know the previous stated results are the first ones for the pure critical exponent problem (\ref{main eq}) for the $p$-Laplacian, $p\neq 2.$ \\
The paper is organized as follows. In Section \ref{GCsec} we state the global compactness results which we need in order to study the Palais-Smale condition for the functional $J,$ this extends the result of \cite{clapp,mercuriwillem}; in Section \ref{ene} we construct a family of positive solutions belonging to the Nehari manifold in a hierarchy of annular domains which is suitable in order to apply the minimax scheme used in \cite{clapppacella}; in sections \ref{proofpo} and \ref{proofsign} we prove the results stated above, while in Section \ref{proofglo}  we sketch the proof of the global compactness results stated in Section \ref{GCsec}; finally in Section \ref{ext} we give some comments and an extension to a more general symmetric setting.

\section{Global compactness results in the presence of symmetries}\label{GCsec}
The compactness results of the present section are meant as an extension of the results of \cite{clapp} and \cite{mercuriwillem}. \\
In the sequel $G$ will denote a closed subgroup of orthogonal transformations of $\R^N.$ The action of $G$ on a function $u: \R^N\rightarrow \R$ is defined through
$$
(gu)(x):=u(g^{-1}x),\,\,x\in\Omega, g\in G.
$$
In particular we consider $G$-invariant PS-sequences, i.e., PS sequences in the closed subspace of $W^{1,p}_0(\Omega)$ defined through
$$
W^{1,p}_0(\Omega)^G:=\{u\in W^{1,p}_0(\Omega)\,:\, gu=u\}.
$$
A domain of $\Omega\subset\R^N$ is $G$-invariant if $gx\in \Omega$ whenever $x\in \Omega,$ for all $g\in G.$ Finally we say that a function $u$ is $G$-invariant if $gu=u$ whenever $g\in G.$
We define  on $W^{1,p}_0(\Omega)$ $$\phi(u)=\int_{\Omega}\frac{|\nabla u|^{p}}{p}+a(x)\frac{|u|^p}{p}-\frac{|u|^{p^*}}{p^*}dx,$$ and on $\mathcal D^{1,p}(\R^N)$

 $$\phi_\infty(u)=\int_{\R^N}\frac{|\nabla u|^{p}}{p}-\frac{u_+^{p^*}}{p^*}dx.
 $$
In the following theorem we assume that  \newline
\\
\textbf{(A)} \quad \quad {\it  $\Omega$ is a smooth bounded domain of $\R^N$, $1<p<N, $ $a\in L^{N/p}(\Omega).$}
\\
\textbf{(B)} \quad \quad  {\it $\Omega, \, a$ are $G$-invariant.}
\\
Boundedness of PS sequences can be guaranteed by assuming that $a$ is such that
$$
  \mathop{\inf_{u \in W^{1,p}_0(\Omega)}}_{\|\nabla u\|_{L^p}=1} \int_{\Omega}|\nabla u|^{p}+a(x)|u|^pdx>0.
$$
See \cite{mercuriwillem}.

%Let us recall that $$<\phi'(u),h>= \int_{\Omega}[|\nabla u|^{p-2}\nabla u \cdot \nabla h+a|u|^{p-2}u \, h-|u|^{p^*-2} %u \,h]dx,$$

%$$<\phi_{\infty}'(u),h>= \int _{\R^N}[|\nabla u|^{p-2}\nabla u\cdot \nabla h-\mu u_+^{p^*-1}\,h]dx.$$
We recall, see e. g. \cite{clapp,diek}, that for a given $y\in \R^N,$ there exists a homeomorphism $Gy\simeq G/ G_y,$ where $Gy:=\{gy\in \R^N,\, g\in G\}$ is the $G$-orbit of $y$ and $G_y$ is the isotropy subgroup of $y,\, G_y:=\{g\in G , gy=y\}.$ The index of $G_y$ in $G,$ which we denote by $|G/G_y|,$ is therefore the cardinality of $Gy.$ \newline
We use the following notation:
$$
\begin{array}{l}
u_+=\max(u,0),u_-=\max(-u,0)\\
\\
\R^N_+=\{x\in\R^N:x_N>0\}.
\end{array}
$$
In view of getting a Coron-type result we have
\begin{theorem}[Palais-Smale sequences nearby the positive cone]\label{GlobalPositive}
Let $1<p<N.$ Under assumptions \textbf{(A)} and \textbf{(B),} let $\{u_n\}_n \subset W^{1,p}_{0} (\Omega)^G$ be a bounded sequence such that
 $$\phi(u_n)\rightarrow c \quad \quad \phi'(u_n)\rightarrow 0 \quad \textrm{in} \,\,W^{-1,p'}(\Omega) $$ and
 $$\|(u_n)_-\|_{L^{p^*}(\Omega)}\rightarrow 0, \quad n\rightarrow \infty.
 $$
 Then, passing if necessary to a subsequence, there exists a solution $v_0\in W^{1,p}_{0}(\Omega)^G$ of
\begin{eqnarray*}
-\Delta_p u +a(x) u^{p-1}= u^{p^*-1} &\textrm{in}& \Omega, \\
u \geq 0 & \textrm{in} & \Omega,
\end{eqnarray*}
a finite number $k\in \mathbb N$ of closed subgroups $\Gamma_1,...,\Gamma_k$ of finite index in $G$ and, correspondingly, $\{v_1,...,v_k\}\subset \mathcal D^{1,p}(\R^N)$ $\Gamma_i-$invariant solutions of
\begin{eqnarray*}
-\Delta_p u = u^{p^*-1} &\textrm{in}& \R^N, \\
u \geq 0 & \textrm{in} & \R^N,
%u\rightarrow 0 & \textrm{as} & |x|\rightarrow\infty,
\end{eqnarray*}
 $k$ sequences $\{y^i_n\}_n \subset \Omega$ and $\{\lambda^i_n\}_n \subset \R_+,$  satisfying
$$G_{y^i_n}=\Gamma_i,\forall n, \qquad y^i_n\rightarrow y^i\in \Omega, \, \textrm{as}\,\, n\rightarrow \infty,$$

$$\frac{1}{\lambda^i_n}\, \textrm{dist} \, (y^i_n,\partial\Omega)\rightarrow \infty , \,\quad n\rightarrow \infty, $$

$$
\frac{1}{\lambda^i_n}|gy^i_n-g'y^i_n|\rightarrow \infty , \,\quad n\rightarrow \infty,\, \forall [g]\neq[g']\in G/ \Gamma_i
$$

$$\|u_n-v_0-\sum^k_{i=1}\sum_{[g]\in G/ \Gamma_i}(\lambda^i_n)^{(p-N)/p}v_i (g^{-1}(\cdot-gy^i_n)/\lambda^i_n)\|\rightarrow 0, \quad n\rightarrow \infty,$$

$$\|u_n\|^p\rightarrow \|v_0\|^p+\sum^k_{i=1}|G/ \Gamma_i| \|v_i\|^p, \quad n\rightarrow \infty,$$

$$\phi(v_0)+\sum^k_{i=1}|G/ \Gamma_i| \phi_\infty (v_i)=c.$$

{\it For $i\geq 1$ there holds $v_i>0,$ whenever $v_i$ is non-trivial, by the strong maximum principle \cite{Vaz}, see also [Theorem 1.2, \cite{mercuriwillem}].}\newline
\end{theorem}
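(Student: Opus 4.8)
The plan is to prove the theorem by an equivariant bubble-extraction scheme in the spirit of Struwe's global compactness, carrying the $G$-action through every step as in \cite{clapp} and adapting the quasilinear estimates of \cite{mercuriwillem}. First I would pass to a subsequence with $u_n\fc v_0$ weakly in $W^{1,p}_0(\Omega)^G$; since this subspace is closed and convex it is weakly closed, so $v_0$ is $G$-invariant. The first key point is to show that $v_0$ solves the limiting equation. For $p\neq 2$ this does not follow from weak continuity: one must first establish that $\nabla u_n\rg\nabla v_0$ almost everywhere on $\Omega$, so that $|\nabla u_n|^{p-2}\nabla u_n\fc|\nabla v_0|^{p-2}\nabla v_0$ can be identified. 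I would obtain this from the monotonicity of the operator together with a truncation and test-function argument of Boccardo--Murat type (as in \cite{mercuriwillem}), testing $\phi'(u_n)\rg 0$ against $u_n-v_0$ localized by cut-offs. Nonnegativity of $v_0$ follows from $\|(u_n)_-\|_{L^{p^*}}\rg 0$.

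Next I would set $w^1_n:=u_n-v_0$ and show, via the Brezis--Lieb lemma and the almost-everywhere gradient convergence just obtained, that $\{w^1_n\}$ is a $G$-invariant Palais--Smale sequence for $\phi_\infty$ at the level $c-\phi(v_0)$ with $w^1_n\fc 0$. If $w^1_n\rg 0$ strongly there are no bubbles and the theorem holds with $k=0$. Otherwise the $L^{p^*}$-mass of $w^1_n$ does not vanish, and I would locate a concentration point $y^1_n$ and a scale $\lambda^1_n\rg 0$ through a Lévy concentration-function normalization; the rescaled sequence $(\lambda^1_n)^{(N-p)/p}w^1_n(\lambda^1_n\,\cdot\,+y^1_n)$ is bounded in $\mathcal D^{1,p}(\R^N)$ and, by a second application of the almost-everywhere gradient convergence, converges weakly to a nontrivial nonnegative solution $v_1$ of the critical equation on the blow-up limit domain. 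Strict positivity $v_1>0$ comes from the strong maximum principle \cite{Vaz}, and one has the uniform lower bound $\phi_\infty(v_1)\geq S^{N/p}/N$.

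The symmetric bookkeeping is handled as follows. Because $w^1_n$ is $G$-invariant, the whole orbit $Gy^1_n$ concentrates with the same profile; using that only finitely many orbit types occur over the compact set $\overline{\Omega}$, I would pass to a further subsequence and fix the isotropy group $\Gamma_1:=G_{y^1_n}$, so that $v_1$ is $\Gamma_1$-invariant and the distinct translates indexed by $[g]\in G/\Gamma_1$ produce the copies in the decomposition. The separation $\frac{1}{\lambda^1_n}|gy^1_n-g'y^1_n|\rg\infty$ for $[g]\neq[g']$ follows from $y^1_n\rg y^1\neq 0$ (as $0\notin\overline{\Omega}$) together with $\lambda^1_n\rg 0$, and it guarantees that the bubbles of the orbit have asymptotically disjoint supports, so their energies add. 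Subtracting the orbit sum from $w^1_n$ yields a new $G$-invariant Palais--Smale sequence at the strictly lower level $c-\phi(v_0)-|G/\Gamma_1|\,\phi_\infty(v_1)$, and I would iterate. Since each extracted orbit lowers the level by at least $|G/\Gamma_1|\,\phi_\infty(v_1)\geq S^{N/p}/N>0$ while the level stays nonnegative, the process terminates after finitely many steps $k$, and the same lower bound forces each index $|G/\Gamma_i|$ to be finite. The asymptotic orthogonality of the bubbles then gives the norm splitting $\|u_n\|^p\rg\|v_0\|^p+\sum_i|G/\Gamma_i|\,\|v_i\|^p$, the profile-decomposition estimate, and the energy identity.

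The main obstacle I expect is twofold and specific to $p\neq 2$. First, the almost-everywhere convergence of gradients, which replaces the trivial linear identification available when $p=2$ and underlies both the recovery of $v_0$ and of each $v_i$; this is the technical heart, and I would treat it by the monotonicity-plus-truncation machinery of \cite{mercuriwillem}. Second, and more seriously, I must rule out concentration at the boundary, i.e.\ prove $\frac{1}{\lambda^i_n}\mathrm{dist}(y^i_n,\partial\Omega)\rg\infty$, so that each blow-up limit lives on all of $\R^N$ rather than on a half-space $\R^N_+$. For $p=2$ this is a consequence of the Esteban--Lions-type nonexistence of positive solutions of the critical problem on the half-space, but as noted in the Introduction this nonexistence is open for $p\neq 2$. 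I would therefore attempt to exclude boundary bubbles directly, exploiting the near-positive-cone hypothesis $\|(u_n)_-\|_{L^{p^*}}\rg 0$ together with a localized Pohozaev and boundary-regularity argument near $\partial\Omega$; this appears to be the delicate point on which the whole statement rests.
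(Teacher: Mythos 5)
Your overall scheme (weak limit, Boccardo--Murat-type a.e.\ gradient convergence, Brezis--Lieb splitting, L\'evy concentration, orbit-wise bubble subtraction, termination by the quantum $S^{N/p}/N$) is the same as the paper's, but two steps on which your argument actually rests are wrong or missing. The most serious one is the exclusion of boundary concentration, which you leave as an ``attempt'' based on a false premise. What is open for $p\neq 2$ is the nonexistence of \emph{sign-changing} solutions of the critical equation on a half-space (this is exactly why Theorem \ref{GlobalSign} must allow profiles on $H_i=\R^N_+$); for \emph{nonnegative} solutions, nonexistence on the half-space with zero boundary data is known --- it is Theorem 1.1 of \cite{mercuriwillem}, resting on the Pohozaev-type identity of \cite{GV} --- and the hypothesis $\|(u_n)_-\|_{L^{p^*}(\Omega)}\to 0$ is in the statement precisely so that every blow-up profile is nonnegative and this theorem applies. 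The paper's Step 5 does exactly this: if $\liminf_n \lambda_n^{-1}\mathrm{dist}(y_n,\partial\Omega)<\infty$, the rescaled limit $v_1$ would be a nonnegative solution on a half-space, hence trivial, contradicting $v_1\neq 0$; therefore $\lambda_n^{-1}\mathrm{dist}(y_n,\partial\Omega)\to\infty$ and the profile lives on all of $\R^N$. Your proposed substitute (a ``localized Pohozaev and boundary-regularity argument'' applied to the Palais--Smale sequence itself) is never carried out, so as written your proof is incomplete at the very point you identify as decisive.

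The second gap is in the equivariant bookkeeping. You claim the separation $\lambda_n^{-1}|gy^1_n-g'y^1_n|\to\infty$ for $[g]\neq[g']$ ``follows from $y^1_n\to y^1\neq 0$ (as $0\notin\overline{\Omega}$) together with $\lambda^1_n\to 0$.'' First, $0\notin\overline{\Omega}$ is not a hypothesis of Theorem \ref{GlobalPositive}; it is only assumed later, for the annular applications. Second, the implication is false even when $y^1\neq 0$: take $G=\Z_2$ generated by a reflection and let $y^1_n$ approach the fixed hyperplane at distance $d_n$ with $d_n/\lambda^1_n\to 0$; then $G_{y^1_n}$ is trivial, $[g]\neq[e]$ in $G/G_{y^1_n}$, yet $|gy^1_n-y^1_n|=2d_n=o(\lambda^1_n)$, so the two ``bubbles'' of the orbit overlap and neither the separation nor the additivity of norms and energies in your decomposition holds. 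For the same reason one cannot simply ``pass to a subsequence and fix the isotropy group $\Gamma_1:=G_{y^1_n}$.'' The correct device, which the paper takes from [\cite{clapp}, p.~121], is to let $\Gamma$ absorb all group elements that move $y^1_n$ by $O(\lambda^1_n)$ and to replace $y^1_n$ by its Euclidean projection $y_n^{1\,\Gamma}$ onto $\mathrm{Fix}(\Gamma)$, which satisfies $\lambda_n^{-1}|y^1_n-y_n^{1\,\Gamma}|\leq C$; one then rescales about $y_n^{1\,\Gamma}$, obtaining a $\Gamma$-invariant profile, exact isotropy $G_{y_n^{1\,\Gamma}}=\Gamma$, and the coset separation \emph{by construction}. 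This projection step is the substantive addition needed to make the Struwe--Mercuri--Willem iteration equivariant, and it is absent from your proposal.
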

The proof will be given in Section \ref{proofglo}.
For sign-changing solutions we will need the following
\begin{theorem}[Sign-changing Palais-Smale sequences]\label{GlobalSign}
Let $1<p<N,$ $\phi$ be as above, and define $\phi_\infty$ in a slightly different way:
 $$\phi_\infty(u):=\int_{\R^N}\frac{|\nabla u|^{p}}{p}-\frac{|u|^{p^*}}{p^*}dx.
 $$
 Under assumptions \textbf{(A)} and \textbf{(B),} let $\{u_n\}_n \subset W^{1,p}_{0} (\Omega)^G$ be a bounded sequence such that
 $$\phi(u_n)\rightarrow c \quad \quad \phi'(u_n)\rightarrow 0 \quad \textrm{in} \,\,W^{-1,p'}(\Omega). $$

 Then, passing if necessary to a subsequence, there exists a solution $v_0\in W^{1,p}_{0}(\Omega)^G$ of
$$
-\Delta_p u +a(x) |u|^{p-2}u=  |u|^{p^*-2}u \quad\textrm{in} \,\, \Omega,
$$
a finite number $k\in \mathbb N$ of closed subgroups $\Gamma_1,...,\Gamma_k$ of finite index in $G$ and, correspondingly, $\{v_1,...,v_k\}\subset W_0^{1,p}(H_i)$  $\Gamma_i-$invariant solutions of
$$
-\Delta_p u = |u|^{p^*-2}u \quad\textrm{in}\,\, H_i
$$
where $H_i$ is either $\R^N$ or (up to translation and rotation) $\R_+^N$ and there exist $k$ sequences $\{y^i_n\}_n \subset \Omega$ and $\{\lambda^i_n\}_n \subset \R_+,$  $\lambda^i_n\rightarrow 0^+,$ satisfying

$$G_{y^i_n}=\Gamma_i,\forall n, \qquad y^i_n\rightarrow y^i\in \Omega\cup\partial \Omega, \, \textrm{as}\,\, n\rightarrow \infty,$$

$$\frac{1}{\lambda^i_n}\, \textrm{dist} \, (y^i_n,\partial\Omega)\rightarrow \infty , \,\quad n\rightarrow \infty $$
if $H_i=\R^N,$ or
$$\frac{1}{\lambda^i_n}\, \textrm{dist} \, (y^i_n,\partial\Omega)< \infty , \,\quad n\rightarrow \infty $$
if (up to translation and rotation) $H_i=\R_+^N,$ and
$$
\frac{1}{\lambda^i_n}|gy^i_n-g'y^i_n|\rightarrow \infty , \,\quad n\rightarrow \infty,\, \forall [g]\neq[g']\in G/ \Gamma_i
$$

$$\|u_n-v_0-\sum^k_{i=1}\sum_{[g]\in G/ \Gamma_i}(\lambda^i_n)^{(p-N)/p}v_i (g^{-1}(\cdot-gy^i_n)/\lambda^i_n)\|\rightarrow 0, \quad n\rightarrow \infty,$$

$$\|u_n\|^p\rightarrow \|v_0\|^p+\sum^k_{i=1}|G/ \Gamma_i| \|v_i\|^p, \quad n\rightarrow \infty,$$

$$\phi(v_0)+\sum^k_{i=1}|G/ \Gamma_i| \phi_\infty (v_i)=c.$$
\end{theorem}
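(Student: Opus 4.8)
The plan is to prove this by the now-standard iterative \emph{bubbling} scheme of Struwe, adapted simultaneously to the quasilinear operator and to the group action, in the spirit of \cite{St,clapp,mercuriwillem}. First I would use boundedness to extract a subsequence with $u_n\fc v_0$ in $W^{1,p}_0(\Om)^G$; since this symmetric subspace is weakly closed, $v_0$ is $G$-invariant. The first genuinely nontrivial point is to pass to the limit in the quasilinear equation: because $\phi'(u_n)\to0$, a Boccardo--Murat type argument yields $\nabla u_n\to\nabla v_0$ a.e., whence $|\nabla u_n|^{p-2}\nabla u_n\fc|\nabla v_0|^{p-2}\nabla v_0$ and $v_0$ solves $-\Delta_p u + a(x)|u|^{p-2}u = |u|^{p^*-2}u$ in $\Om$.

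Second, I would set $w_n:=u_n-v_0$ and show, via Brezis--Lieb splitting applied both to $\int|u|^{p^*}$ and --- using the a.e. gradient convergence just obtained --- to $\int|\nabla u|^p$, that $\{w_n\}$ is a new $G$-invariant Palais--Smale sequence for $\phi_\infty$ at level $c-\phi(v_0)$, with $w_n\fc0$. If $\|w_n\|\to0$ the decomposition holds with $k=0$; otherwise a concentration-function argument locates points $y_n^1\in\Om$ and scales $\lambda_n^1\to0^+$ such that the rescaled functions $(\lambda_n^1)^{(N-p)/p}w_n(y_n^1+\lambda_n^1\,\cdot)$ converge weakly to a nontrivial limit $v_1$.

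Third comes the domain and symmetry bookkeeping. According to whether $\mathrm{dist}(y_n^1,\partial\Om)/\lambda_n^1$ stays bounded or diverges, the blown-up domains converge (after translation and rotation) to $\R^N_+$ or to $\R^N$, and $v_1$ solves $-\Delta_p u=|u|^{p^*-2}u$ on the corresponding $H_1$, now possibly sign-changing. Here the quasilinear setting differs essentially from Theorem \ref{GlobalPositive}: since no Pohozaev/unique-continuation Liouville theorem on the half-space is available for $p\neq2$, the boundary profiles cannot be discarded and must be retained. The $G$-invariance of $w_n$ forces concentration at the entire orbit $Gy_n^1$ at once; I would show the isotropy group $\Gamma_1:=G_{y_n^1}$ can be taken constant in $n$ and of finite index, that the orbit points separate at scale $\lambda_n^1$ (i.e. $|gy_n^1-g'y_n^1|/\lambda_n^1\to\infty$ for $[g]\neq[g']$), and that $v_1$ is $\Gamma_1$-invariant. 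Subtracting the $|G/\Gamma_1|$ rescaled copies attached to the orbit produces a new $G$-invariant Palais--Smale remainder whose energy has dropped by $|G/\Gamma_1|\,\phi_\infty(v_1)$.

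Finally, since every nontrivial solution of the limit equation on $\R^N$ or $\R^N_+$ carries energy $\phi_\infty(v_i)\geq S^{N/p}/N$, each step lowers the level by at least $S^{N/p}/N$; as $c<\infty$, the iteration terminates after finitely many steps $k$, and collecting the profiles yields the claimed expansion, the norm identity $\|u_n\|^p\to\|v_0\|^p+\sum_i|G/\Gamma_i|\,\|v_i\|^p$, and the energy identity. The main obstacle throughout is the quasilinear bubble-subtraction: one must re-establish a.e. gradient convergence after each rescaling so that the nonlinear operator passes to the limit and the $L^p$-gradient norm splits additively, and one must control interactions between bubbles on a common orbit and at different scales --- tasks for which the explicit-kernel and linear-superposition tools of the case $p=2$ are unavailable, forcing reliance on the monotone-operator/truncation techniques of \cite{mercuriwillem} combined with the symmetric framework of \cite{clapp}.
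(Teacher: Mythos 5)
Your proposal follows essentially the same route as the paper: the paper proves Theorem \ref{GlobalSign} by the same Struwe-type iteration (weak limit via the splitting lemmas of \cite{mercuriwillem} plus symmetric criticality, L\'evy concentration function, Clapp's orbit/isotropy bookkeeping, bubble subtraction with cut-offs, and energy quantization $\phi_\infty(v_i)\geq S^{N/p}/N$) that it details for Theorem \ref{GlobalPositive}, stating that only ``small changes'' are needed for the sign-changing case. In particular, you correctly identify the one genuine difference the paper itself points to --- since no Liouville-type nonexistence result for sign-changing solutions of the critical equation on the half-space is available for $p\neq 2$, the boundary profiles $H_i=\R^N_+$ cannot be ruled out and must be retained in the decomposition.
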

As for the previous theorem, the proof will be given in Section \ref{proofglo}.\\
Now define the functional $J$ as in (\ref{J}).
\begin{definition}
{\it Given a subset $A\subset W^{1,p}_{0}(\Omega)^G,$ we say that the functional $J$ (see (\ref{J})) satisfies $(PS)_c$ relative to $A$ in $W^{1,p}_{0}(\Omega)^G$ if every sequence $\{u_n\}_n\subset W^{1,p}_{0} (\Omega)^G$ such that
$$u_n \notin A, \quad \quad J(u_n)\rightarrow c \quad \quad J'(u_n)\rightarrow 0 \quad \textrm{in} \,\,W^{-1,p'}(\Omega),$$
contains a strongly convergent subsequence. If $A=\oslash$ we say that $J$ satisfies $(PS)_c$ in $W^{1,p}_{0} (\Omega)^G.$}
\end{definition}
Let $\mathcal P$ be the positive cone in $W^{1,p}_{0} (\Omega)^G,$ namely
$\mathcal P:= \{ u \in  W^{1,p}_{0} (\Omega)^G \, : \, u_- \equiv 0\},$
and define $J^d:=\{u\in W^{1,p}_{0} (\Omega)^G\, :\, J(u)\leq d\}$ and $$\Pi_d:= \mathcal P\cup (-\mathcal P)\cup J^d.$$
We will need the following
\begin{corollary}[Palais-Smale condition for $J$]\label{PSJc}
Let $c_\infty$ be defined by (\ref{best}) and $l$ as in (\ref{minimal}).
\begin{itemize}
\item [i)] $J$ satisfies $(PS)_c$ in $W^{1,p}_{0}(\Omega)^G$ for every $c<lc_\infty.$
\item [ii)] If $l\geq2$ there exists $\varepsilon_0$ such that $J$ satisfies $(PS)_c$ relative to $\Pi_0$ in $W^{1,p}_{0}(\Omega)^G$ for every $c<(l+1)c_\infty+\varepsilon_0.$
\end{itemize}

\end{corollary}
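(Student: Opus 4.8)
The plan is to read both statements off the decomposition of Theorem~\ref{GlobalSign}, applied with $a\equiv0$ so that $\phi=J$ and $\phi_\infty(u)=\frac1p\|u\|^p-\frac1{p^*}\|u\|_{p^*}^{p^*}$. First I would record three elementary facts. (a) Every nontrivial solution $w$ of the critical equation on $H_i\in\{\R^N,\R^N_+\}$ lies on its Nehari manifold, so $\phi_\infty(w)=\frac1N\|w\|^p$, and Sobolev's inequality (same best constant $S$ on $\R^N$ and on a half-space) forces $\|w\|^p\ge S^{N/p}$; hence $\phi_\infty(w)\ge c_\infty$, with $\phi_\infty(w)\ge2c_\infty$ when $w$ changes sign, since testing the equation against $w_\pm$ puts each part on the Nehari manifold. (b) Any solution $v_0$ in $\Omega$ satisfies $J(v_0)=\frac1N\|v_0\|^p\ge0$. (c) Since $0\notin\bar\Omega$, every limiting concentration point $y^i$ is nonzero, so $|G/\Gamma_i|=\#Gy^i\ge l$. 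Boundedness of the Palais--Smale sequences is automatic from $\frac1N\|u_n\|^p=J(u_n)-\frac1{p^*}(J'(u_n),u_n)$. For (i), applying Theorem~\ref{GlobalSign} at level $c<lc_\infty$: if the decomposition had any bubble ($k\ge1$) then $c=J(v_0)+\sum_{i=1}^k|G/\Gamma_i|\phi_\infty(v_i)\ge lc_\infty$, a contradiction; so $k=0$ and the remainder term gives $\|u_n-v_0\|\to0$.

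The crux of (ii), and the origin of $\varepsilon_0$, is an energy gap that I would establish first: there is $\varepsilon_0\in(0,c_\infty)$ such that no nontrivial $G$-symmetric solution $v$ of $-\Delta_p v=|v|^{p^*-2}v$ in $\Omega$ has $J(v)<c_\infty+\varepsilon_0$. Indeed, were there solutions $v_j$ with $J(v_j)\to c_\infty$, then, being critical points, they form a bounded Palais--Smale sequence and Theorem~\ref{GlobalSign} gives $c_\infty=J(v_\infty)+\sum|G/\Gamma_i|\phi_\infty(v_i)$. A single bubble would already cost at least $lc_\infty\ge2c_\infty>c_\infty$ --- here $l\ge2$ is essential, since symmetry forbids an isolated bubble --- so there is none, whence $v_j\to v_\infty$ strongly and $J(v_\infty)=c_\infty$. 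Then $v_\infty$ would realize the infimum $c_\infty$ on $\mathcal N(\Omega)$, contradicting the fact recalled after~(\ref{best}) that $S$ is never attained on a bounded domain.

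With this $\varepsilon_0$ fixed I would take a Palais--Smale sequence $u_n\notin\Pi_0$ at level $c<(l+1)c_\infty+\varepsilon_0$, suppose it has no convergent subsequence, so $k\ge1$, and reach a contradiction. If $k\ge2$ then $c\ge2lc_\infty\ge(l+1)c_\infty+(l-1)c_\infty>(l+1)c_\infty+\varepsilon_0$, impossible; thus $k=1$. A sign-changing bubble $v_1$ would give $\phi_\infty(v_1)\ge2c_\infty$, hence $c\ge2lc_\infty$, while a sign-changing $v_0$ would give $c\ge(l+2)c_\infty$; both are excluded, so $v_1$ is of one sign, say $v_1>0$, and $v_0$ does not change sign. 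Now I use that $u_n$ is kept away from $\mathcal P\cup(-\mathcal P)$, so $\liminf\|(u_n)_-\|>0$; since the bubbles are positive with shrinking support one has $(u_n)_-\to(v_0)_-$, forcing $(v_0)_-\neq0$ and therefore $v_0\le0$, $v_0\neq0$. The gap lemma then yields $J(v_0)\ge c_\infty+\varepsilon_0$, so $c=J(v_0)+|G/\Gamma_1|\phi_\infty(v_1)\ge(c_\infty+\varepsilon_0)+lc_\infty=(l+1)c_\infty+\varepsilon_0$, contradicting the bound on $c$. Hence $k=0$ and $u_n$ converges strongly.

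The step I expect to be most delicate is precisely the one converting ``staying away from the cones'' into a definite extra quantum $c_\infty$ of energy: one must exclude the cheap non-compact profile made of a single positive bubble orbit (energy $lc_\infty$) sitting over a negative piece of energy arbitrarily close to $c_\infty$. The energy gap lemma is exactly what rules this out, and it is the only place where $l\ge2$ and the non-attainment of $S$ on bounded domains enter in an indispensable way; the remaining bookkeeping is routine once the decomposition of Theorem~\ref{GlobalSign} is granted.
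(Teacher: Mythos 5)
Your part (i) is exactly the paper's argument: quantization of $\phi_\infty$ on nontrivial profiles, $|G/\Gamma_i|\ge l$ because $0\notin\bar\Omega$, and the energy identity force $k=0$. For part (ii) you take a genuinely different route, and in fact a stronger one. The paper picks $\varepsilon_0\in(0,c_\infty]$ so that $J$ has no \emph{sign-changing} critical point with $J(u)<c_\infty+\varepsilon_0$ (this is automatic with $\varepsilon_0=c_\infty$, since both parts of a sign-changing solution lie on the Nehari manifold, giving $J\ge 2c_\infty$), then restates the decomposition of Theorem \ref{GlobalSign} \emph{without} the weak limit $v_0$, and concludes in one line that a single constant-sign bubble "contradicts $u_n\notin\Pi_0$". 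It never addresses the configuration you correctly isolate as the dangerous one: a nontrivial constant-sign $v_0$ plus one bubble orbit of the opposite sign, at level $J(v_0)+lc_\infty$. The paper's $\varepsilon_0$ cannot exclude this: by the paper's own Theorem \ref{coronn}, constant-sign $G$-symmetric solutions with $J(v_0)$ arbitrarily close to $c_\infty$ do exist in small-hole domains, so levels $J(v_0)+lc_\infty<(l+1)c_\infty+\varepsilon_0$ are genuinely exposed. Your gap lemma --- no nontrivial $G$-symmetric solution of \emph{any} sign below $c_\infty+\varepsilon_0$, proved by running Theorem \ref{GlobalSign} on a sequence of exact critical points (where $l\ge2$ kills all bubbles) and invoking non-attainment of $S$ on bounded domains --- is precisely what closes this case, and your bookkeeping ($k=1$, $v_1$ of one sign, $v_0$ of one sign, then $J(v_0)\ge c_\infty+\varepsilon_0$ forces $c\ge(l+1)c_\infty+\varepsilon_0$) is correct. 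So your $\varepsilon_0$ is not merely a different choice; it is the choice under which the statement is actually provable, and on this point your proof is more complete than the paper's.

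The step you should not gloss is the one you flag yourself: passing from $u_n\notin\mathcal P\cup(-\mathcal P)$ to $\liminf\|(u_n)_-\|>0$ (resp. $\liminf\|(u_n)_+\|>0$). With the paper's literal definition of $(PS)_c$ relative to $A$ (pointwise non-membership only), this implication is false, and worse, the statement itself is then in jeopardy: a single positive bubble orbit perturbed by a vanishing negative bump is sign-changing for every $n$, is (modulo a routine gluing estimate) a Palais--Smale sequence at level $lc_\infty<(l+1)c_\infty+\varepsilon_0$, and has no convergent subsequence. Neither your argument nor the paper's (whose final contradiction needs the same implication) can close under that literal reading. The implication, and the corollary, are correct under the reading the application actually requires (Lemma \ref{mmm}, i.e. Clapp--Pacella's Theorem 3.7): the sequences to be handled stay at a fixed positive distance from the cones. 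Then testing $J'(u_n)$ against $(u_n)_\pm$ gives the standard dichotomy $\|(u_n)_\pm\|\to0$ or $\|(u_n)_\pm\|^p\ge S^{N/p}+o(1)$, which makes your $\liminf$ quantitative; and your deduction that $(v_0)_-\neq0$ is then sound, since positivity of the bubbles alone gives, with $B_n$ denoting the nonnegative bubble sum, $(u_n)_-\le(v_0)_-+|u_n-v_0-B_n|$ pointwise (no "shrinking support" property is needed, and it is safest to run this in the $L^{p^*}$ norm). State that reading explicitly and your proof is complete --- it is the same license the paper's own last line takes silently.
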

 \begin{proof}
In Theorem \ref{GlobalSign}, multiplying the equation
$$
-\Delta_p u = |u|^{p^*-2}u \quad\textrm{in}\,\, H_i
$$

by $(v_i)_+$ and $(v_i)_-,$ and using the Sobolev inequality, for $i=0,1,...,k$ then we have necessarily

$$
\phi_\infty(v_i)=0,
$$

$$
\phi_\infty(v_i)\geq c_\infty,
$$

$$
\phi_\infty(v_i)\geq 2 c_\infty,
$$
according to the cases $v_i$ is (respectively) zero, has constant sign or changes sign. Here we put $H_0:=\R^N$ and we extend $v_0$ by zero in $\R^N\setminus \Omega.$\\
In order to prove $(i)$ notice that necessarily we have $k<1,$ hence $k=0.$ \\
For $(ii)$ we argue as follows. Notice that for $l\geq2$ we have $(l+1)c_\infty+\varepsilon_0 \leq 2lc_\infty,$ for every $\varepsilon_0\in (0,c_\infty]. $ In particular let $\varepsilon_0\in (0,c_\infty]$ be such that $J$ has no sign-changing critical points $u\in W^{1,p}_{0}(\Omega)$ with $J(u)<c_\infty+\varepsilon_0.$ Pick $\{u_n\}_n$ such that $$u_n \notin \Pi_{0}, \quad \quad J(u_n)\rightarrow c \quad \quad J'(u_n)\rightarrow 0 \quad \textrm{in} \,\,W^{-1,p'}(\Omega),$$ and assume by contradiction that $u_n$ does not have a strongly convergent subsequence. Then by Theorem \ref{GlobalSign}
there exists a finite set $\{v_1,...,v_k\}\subset W_0^{1,p}(H_i)$ of $\Gamma_i-$invariant nontrivial solutions of
$$
-\Delta_p u = |u|^{p^*-2}u \quad\textrm{in}\,\, H_i,
$$
satisfying
$$\sum^k_{i=1}|G/ \Gamma_i| \phi_\infty (v_i)=c$$

$$\|u_n-\sum^k_{i=1}\sum_{[g]\in G/ \Gamma_i}(\lambda^i_n)^{(p-N)/p}v_i (g^{-1}(\cdot-gy^i_n)/\lambda^i_n)\|\rightarrow 0, \quad n\rightarrow \infty,$$

for some sequences of finite index subgroups $\Gamma_i,$ points $\{y^i_n\}_n \subset \Omega$ and scalars $\{\lambda^i_n\}_n \subset \R_+.$\\
Hence for every $c<(l+1)c_\infty+\varepsilon_0$ the only possibility is that $k=1$ and $v_1$ does not change sign. Hence either $v_1\in \mathcal P$ or $v_1\in -\mathcal P$ (notice that by the strong maximum principle \cite{Vaz} applied either to $v_1$ or (resp.) to $-v_1,$ then $v_1>0 $ or (resp.) $v_1<0$). But this is a contradiction since $u_n \notin \Pi_{0}.$

 \end{proof}
\begin{rem}
Theorem \ref{GlobalSign} provides a description of the lack of the Palais-Smale condition by means of possible nontrivial solutions of the aforementioned critical equation either in $\R^N$ or in the halfspace. The latter possibility we are not able to rule out for sign-changing PS sequences, except for the following case. Define as in \cite{clapp} the optimal constant $$\mu^G:= \min_{x\in\bar{\Omega}}(\# G x)\frac{S^\frac{N}{p}}{N}\leq \infty$$ and consider the particular case $c:=\mu^G$ and $a\equiv 0.$ Then, if $\{u_n\}_n\subset W^{1,p}_0(\Omega)^G$ does not converge strongly (in this case necessarily $\mu^G<\infty$), then, $\{v_i\}_{i=1,...,k}\equiv\{v_1\},$ with $v_1(x):=u(x)=[\alpha+\beta|x|^{p/(p-1)}]^{1-N/p},\quad \alpha,\beta>0$ being the optimizers of the Sobolev inequality, namely, the Talenti functions \cite{T}.
\end{rem}

\section{Energy calibration for solutions in annular domains}\label{ene}

We have the following
\begin{lemma}\label{homog}
There holds \begin{equation}\label{hom}
c(R_1,R_2)=c(R_1/R_2,1),
\end{equation}
where $c(R_1,R_2)$ is defined in (\ref{neh}). Moreover
\begin{equation}\label{cinf}
c(R,1)\rightarrow c_\infty,\quad \textrm{as} \,\,\,R\rightarrow 0.\end{equation}
\end{lemma}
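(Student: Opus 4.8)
The plan is to treat the two identities separately. For $(\ref{hom})$ I would use the scale invariance of the problem: the dilation $u\mapsto u_\lambda$, $u_\lambda(x):=\lambda^{(N-p)/p}u(\lambda x)$, satisfies $\int|\nabla u_\lambda|^p=\int|\nabla u|^p$ and $\int|u_\lambda|^{p^*}=\int|u|^{p^*}$ by a change of variables, so it preserves both $J$ and the Nehari constraint $(J'(u),u)=0$. Taking $\lambda=R_2$ sends $A_{R_1,R_2}$ onto $A_{R_1/R_2,1}$, and since the dilation commutes with the $O(N)$-action it restricts to a $J$-preserving bijection $\mathcal N(A_{R_1,R_2})^{O(N)}\to\mathcal N(A_{R_1/R_2,1})^{O(N)}$. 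Passing to infima yields $(\ref{hom})$.

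For $(\ref{cinf})$ the lower bound $c(R,1)\geq c_\infty$ is immediate and holds for every $R\in(0,1)$: extending any $u\in\mathcal N(A_{R,1})^{O(N)}$ by zero to $\R^N$ and invoking the definition of $S$, one has $\|\nabla u\|_{L^p}/\|u\|_{L^{p^*}}\geq S^{1/p}$; since on the Nehari manifold $J(u)=\frac1N\big(\|\nabla u\|_{L^p}/\|u\|_{L^{p^*}}\big)^N$, it follows that $J(u)\geq S^{N/p}/N=c_\infty$.

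The core of the argument, and the step I expect to be the main obstacle, is the matching upper bound $\limsup_{R\to0}c(R,1)\leq c_\infty$, which requires producing radial competitors on the degenerating annulus whose Sobolev quotient tends to $S^{1/p}$. Let $U$ be a radial Talenti extremal for $S$ and set $U_\eps(x):=\eps^{-(N-p)/p}U(x/\eps)$, which is $O(N)$-symmetric with $\|\nabla U_\eps\|_{L^p}/\|U_\eps\|_{L^{p^*}}=S^{1/p}$ for all $\eps$. I would truncate by a radial cut-off $\eta$ equal to $1$ on $A_{2R,1/2}$ and vanishing on $B_R$ and outside $B_1$, so that $\eta U_\eps\in W^{1,p}_0(A_{R,1})^{O(N)}$, and then choose an intermediate scale $R\ll\eps\ll1$ (for instance $\eps=\sqrt R$) that balances the two truncation errors. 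The outer error near $|x|=1$ is the classical Sobolev truncation term and vanishes like a positive power of $\eps$; the inner error near $|x|=R$ is the genuinely new ingredient caused by the hole, and using $U_\eps(x)\approx U(0)\eps^{-(N-p)/p}$ for $|x|\lesssim R\ll\eps$ one bounds the extra gradient energy $\int_{B_{2R}\setminus B_R}|U_\eps\nabla\eta|^p$ by $O\big((R/\eps)^{N-p}\big)$, while the discarded masses satisfy $\int_{B_R}|\nabla U_\eps|^p=\int_{B_{R/\eps}}|\nabla U|^p\to0$ and $\int_{B_R}|U_\eps|^{p^*}\to0$; all of these tend to $0$ precisely because $R/\eps\to0$. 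Hence $\|\nabla(\eta U_\eps)\|_{L^p}/\|\eta U_\eps\|_{L^{p^*}}\to S^{1/p}$, and projecting $\eta U_\eps$ onto $\mathcal N(A_{R,1})^{O(N)}$ by the scaling $t\mapsto t\,\eta U_\eps$ (which leaves the quotient unchanged) produces an admissible function with $J$-value $\frac1N(\text{quotient})^N\to c_\infty$. The delicate points are keeping every object radial — which holds since $U$, $U_\eps$ and $\eta$ all are — and carrying out the truncation estimates for general $1<p<N$, where the explicit decay $U(y)\sim|y|^{-(N-p)/(p-1)}$ and the $p$-dependent exponents replace the familiar $p=2$ computation.
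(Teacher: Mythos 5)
Your proof of (\ref{hom}) is essentially identical to the paper's: the dilation $u_\lambda$ with $\lambda=R_2$ preserves $J$ and the Nehari constraint, commutes with the $O(N)$-action, and maps $\mathcal N(A_{R_1,R_2})^{O(N)}$ bijectively onto $\mathcal N(A_{R_1/R_2,1})^{O(N)}$. For (\ref{cinf}), however, you take a genuinely different route to the upper bound. The paper argues softly: it first takes an almost-minimizer $\bar u_\varepsilon\in\mathcal N(B_{R_\varepsilon})^{O(N)}$ with $J(\bar u_\varepsilon)\leq c_\infty+\varepsilon$ on a large ball, then uses the density of $\mathcal D(B_{R_\varepsilon}\setminus\{0\})^{O(N)}$ in $W^{1,p}_0(B_{R_\varepsilon})^{O(N)}$ --- that is, the fact that a point has zero $p$-capacity when $p<N$ --- to replace it at negligible cost by a radial function vanishing near the origin, and finally rescales by $R_\varepsilon$ to compress the support into an annulus $A_{\eta_\varepsilon,1}$; no asymptotics of the Talenti functions are ever invoked. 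You instead make the hole ``by hand'': you concentrate a Talenti bubble at scale $\eps=\sqrt R$ and truncate at the two scales $R$ and $1$, bounding the inner cutoff error by $O\bigl((R/\eps)^{N-p}\bigr)$ and the outer one via the decay $U(y)\sim|y|^{-(N-p)/(p-1)}$; projecting by $t\mapsto t\,\eta U_\eps$ then gives a Nehari competitor with energy $\frac1N(\hbox{quotient})^N\to c_\infty$. Both arguments are correct, and both ultimately rest on $p<N$: for you through the positivity of the exponent $N-p$, for the paper through the null $p$-capacity of the origin. Your computation buys a quantitative rate for $c(R,1)-c_\infty$ and yields the limit directly for every $R\to0$, whereas the paper's argument only produces a sequence $\eta_\varepsilon\to0$ and implicitly uses the monotonicity of $R\mapsto c(R,1)$ (extension by zero) to conclude; conversely, the paper's argument avoids the $p$-dependent truncation estimates entirely, which is convenient since for $p\neq2$ these, while standard, are more tedious than the familiar semilinear computation. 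One small point worth making explicit in your write-up: after truncation $\eta U_\eps$ is not on the Nehari manifold, so the projection step is genuinely needed (the analogous issue after the density step is glossed over in the paper as well).
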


\begin{proof}
After extending by zero outside the annulus, equation (\ref{hom}) follows by the Sobolev invariance by dilations $u_\lambda(\cdot):=\lambda^{(p-N)/p}u(\cdot/\lambda),$ and choosing in particular $\lambda=R_2.$    \\
In order to prove (\ref{cinf}) fix $\varepsilon>0$. Then there exist $R_\varepsilon>0$ and $\bar{u}_\varepsilon \in \mathcal N(B_{R_\varepsilon})^{O(N)}$ such that $$J(\bar{u}_\varepsilon)\leq c_\infty+\varepsilon,$$ for  $R_\varepsilon$ sufficiently large.
Since $\mathcal D (B_{R_\varepsilon}\setminus\{0\})^{O(N)}$ is dense in $W^{1,p}_0(B_{R_\varepsilon})^{O(N)},$ there exists $u'_\varepsilon \in \mathcal D (B_{R_\varepsilon}\setminus\{0\})^{O(N)}$ such that $$\frac{1}{N}\int_{B_\varepsilon}|\nabla u'_\varepsilon|^p dx\leq c_\infty+2\varepsilon.$$ Define by scaling $u_\varepsilon(\cdot):= R_\varepsilon^{\frac{N-p}{p}}u'_\varepsilon(R_\varepsilon \cdot).$ Notice that $\textrm {supp}\, u_\varepsilon \subset A_{\eta_\varepsilon,1}$ for some $\eta_\varepsilon>0$ small and $$c_\infty \leq c(\eta_\varepsilon, 1)\leq  \frac{1}{N}\int_{A_{\eta_\varepsilon,1}}|\nabla u_\varepsilon|^p dx=\frac{1}{N}\int_{B_\varepsilon}|\nabla u'_\varepsilon|^p dx\leq c_\infty+2\varepsilon.$$

\end{proof}

\begin{lemma}\label{cal}
Fix $0<R_1<R_2$ and $m\in \N.$ Then there exist radii $R_2=:r_0>r_1>...>r_m:=R_1,$ and nonnegative radial functions $\omega_i \in \mathcal N (A_{R_1,R_2}),$ $i=1,...,m$ such that
$$
\textrm{supp} \,\omega_i\subset A_{r_{i},r_{i-1}},\quad \textrm{and} \quad J(\omega_i)=c(R^{\frac{1}{m}}_1,R^{\frac{1}{m}}_2).
$$
Furthermore $\omega_i>0$ in $A_{r_{i},r_{i-1}},$ $\omega_i \in \mathcal C^{0, \alpha}(\R^N)$ after extending by zero outside $A_{r_{i},r_{i-1}},$ and $\mathcal C^{1, \alpha}(\overline{A_{r_{i},r_{i-1}}}).$
\end{lemma}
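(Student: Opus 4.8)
The plan is to choose the intermediate radii so that the $m$ sub-annuli are mutually homothetic, so that the dilation invariance recorded in Lemma~\ref{homog} forces the least radial energy on each of them to equal the same number, and then to realize that common value by a genuine radial minimizer produced by direct methods. Concretely, set
\[
r_i := R_2^{\,1-i/m}\,R_1^{\,i/m},\qquad i=0,1,\dots,m,
\]
so that $r_0=R_2$, $r_m=R_1$, $r_0>r_1>\cdots>r_m$, and every consecutive ratio is constant: $r_i/r_{i-1}=(R_1/R_2)^{1/m}$ for all $i$. Consequently each $A_{r_i,r_{i-1}}$ is a dilate of $A_{(R_1/R_2)^{1/m},1}$, and Lemma~\ref{homog} yields
\[
c(r_i,r_{i-1})=c\!\left(\tfrac{r_i}{r_{i-1}},1\right)=c\!\left((R_1/R_2)^{1/m},1\right)=c\!\left(R_1^{1/m},R_2^{1/m}\right)
\]
for every $i=1,\dots,m$. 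Thus the target energy level is the same constant $c(R_1^{1/m},R_2^{1/m})$ on all the sub-annuli, as required.

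\emph{Attainment on each sub-annulus.} Fix $i$ and minimize $J$ over $\mathcal N(A_{r_i,r_{i-1}})^{O(N)}$; equivalently, minimize the Sobolev quotient $\|\nabla u\|_{L^p}^p/\|u\|_{L^{p^*}}^p$ over radial $u\in W^{1,p}_0(A_{r_i,r_{i-1}})\setminus\{0\}$, since on the Nehari manifold $J(u)=\frac1N\|\nabla u\|_{L^p}^p$. Because the annulus is bounded away from the origin, a radial function reduces to a one-dimensional function of $r$ on the compact interval $[r_i,r_{i-1}]$ against the weight $r^{N-1}$, which is bounded above and below; hence the embedding $W^{1,p}_{0,\mathrm{rad}}(A_{r_i,r_{i-1}})\hookrightarrow L^{p^*}$ is compact (the fact quoted in the Introduction). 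A normalized minimizing sequence then admits a strongly convergent subsequence whose limit attains the infimum; rescaling it onto the Nehari manifold produces $\omega_i\in\mathcal N(A_{r_i,r_{i-1}})^{O(N)}$ with $J(\omega_i)=c(r_i,r_{i-1})=c(R_1^{1/m},R_2^{1/m})$.

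\emph{Sign, regularity and final membership.} Since $J$ and the Nehari constraint depend only on $|\nabla u|$ and $|u|$, replacing $\omega_i$ by $|\omega_i|$ leaves it on $\mathcal N(A_{r_i,r_{i-1}})^{O(N)}$ at the same energy, so we may take $\omega_i\ge0$. By the principle of symmetric criticality together with the fact that the Nehari manifold is a natural constraint, $\omega_i$ solves $-\Delta_p\omega_i=\omega_i^{p^*-1}$ in $A_{r_i,r_{i-1}}$ with zero boundary data; the regularity theory for the $p$-Laplacian (DiBenedetto, Tolksdorf) then gives $\omega_i\in\mathcal C^{1,\alpha}(\overline{A_{r_i,r_{i-1}}})$, and the strong maximum principle of \cite{Vaz} gives $\omega_i>0$ in the open sub-annulus. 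As $\omega_i$ vanishes on $\partial A_{r_i,r_{i-1}}$, its extension by zero is globally $\mathcal C^{0,\alpha}(\R^N)$ and lies in $W^{1,p}_0(A_{R_1,R_2})$ with support in $\overline{A_{r_i,r_{i-1}}}$; the identity $(J'(\omega_i),\omega_i)=0$ is unaffected by the extension, so $\omega_i\in\mathcal N(A_{R_1,R_2})$ as well, and $\mathrm{supp}\,\omega_i\subset A_{r_i,r_{i-1}}$ gives the disjoint-support structure.

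\emph{Main obstacle.} The clean scaling reduces everything to a single analytic point: attainment of the radial infimum together with the strict positivity and $\mathcal C^{1,\alpha}$ regularity of the minimizer. Compactness of the radial embedding is standard for annuli away from the origin, but for $p\neq2$ the strict positivity and up-to-the-boundary regularity are the delicate ingredients and must be obtained from the quasilinear strong maximum principle of \cite{Vaz} and DiBenedetto--Tolksdorf regularity rather than from classical Schauder and Hopf arguments.
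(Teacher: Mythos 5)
Your proposal is correct and follows essentially the same route as the paper: the same geometric choice of radii $r_i/r_{i-1}=(R_1/R_2)^{1/m}$, the scaling identity of Lemma~\ref{homog}, direct minimization via compactness of the radial embedding on an annulus away from the origin (the paper cites the Strauss inequality of \cite{SWW} where you give the one-dimensional reduction), replacement by $|\omega_i|$, $\mathcal C^{1,\alpha}$ regularity plus the strong maximum principle of \cite{Vaz}, and extension by zero into $\mathcal N(A_{R_1,R_2})$. The only difference is that you make explicit the (correct) intermediate step that the constrained minimizer solves the Euler--Lagrange equation via symmetric criticality and the Nehari manifold being a natural constraint, which the paper leaves implicit.
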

\begin{proof}
Define $m$ radii $R_2=:r_0>r_1>...>r_m:=R_1,$ through the condition $\frac{r_i}{r_{i-1}}= C, \forall i=1,...,m.$ It follows $C=(R_1/R_2)^{1/m}.$ By the Strauss inequality [Lemma 2.1, \cite{SWW}] the embedding
$\mathcal D^{1,p}_{rad}(\R^N)\subset L^{p^*}_{loc}(\R^N\setminus \{0\})$ is compact. Hence, by direct minimization, each $c(r_{i},r_{i-1})$ is attained by a function $\bar{\omega}_i \in W^{1,p}_{0,rad}(A_{r_{i},r_{i-1}}).$ Since $J(|u|)=J(u),$ $\bar{\omega}_i$ may be selected to be nonnegative.  The regularity follows from standard arguments, see e.g. [\cite{mercuriwillem}, p. 472]. The positivity is a consequence of the strong maximum principle \cite{Vaz}. Now extend $\bar{\omega}_i$ by zero outside $A_{r_{i},r_{i-1}},$ obtaining $\omega_i \in \mathcal N (A_{R_1,R_2}).$ By construction and from (\ref{hom}) we have $$J(\omega_{i+1})=c(r_{i+1},r_{i})=c(r_{i+1}/r_i,1)=c(C,1)=c({r_i}/r_{i-1},1)=c(r_i, r_{i-1})=J(\omega_i).$$ Finally from (\ref{hom}) we have $$J(\omega_i)=c((R_1/R_2)^{1/m},1)=c(R^{\frac{1}{m}}_1,R^{\frac{1}{m}}_2).$$

\end{proof}

\begin{rem}
In spite of [Remark 2.2, \cite{clapppacella}], we cannot conclude in our case that $$\tilde{\bar{\omega}}_m:=\sum^m_{i=1}(-1)^i\omega_i$$ is a $ C^{1, \alpha}$  sign-changing solution in $W^{1,p}_0(A_{R_1,R_2})^{O(N)}$.
\end{rem}

\section{Existence of a positive solution}\label{proofpo}
We prove Theorem \ref{coronn} in the spirit of Coron \cite{coron}. \\
\begin{proof}[Proof of Theorem \ref{coronn}]
Lemma \ref{homog} guarantees that for every  $\delta>0$ there exists $R_\delta>0$ such that $$c(R_1,R_2)\leq c_\infty+\delta,$$ provided $R_1/R_2<R_\delta.$ \newline
Notice also that the embedding $\mathcal N(A_{R_1,R_2})^{O(N)}\hookrightarrow \mathcal N(\Omega)^G  $ yields $$\inf_{\mathcal N(\Omega)^G}J(u)\leq c(R_1,R_2).$$
It follows that $$c:=\inf_{u\in W_0^{1,p}(\Omega)^G\setminus \{0\}}\max_{t>0}J(tu)=\inf_{\mathcal N(\Omega)^G}J(u)\leq c_\infty+\delta.$$ By the general minimax principle [see e.g. \cite{Will}], there exists a Palais-Smale sequence $\{u_n\}_n \subset W_0^{1,p}(\Omega)^G$ at level $c,$ such that, if for $\gamma_n\in X$  $$\max_{t\in
[0,1]}J (\gamma_n(t))\leq c+ \frac{1}{n},$$ then
\begin{equation}\label{positivecone}
\textrm{dist}(u_n, \gamma_n([0,1]))<\frac{1}{n},
\end{equation}
where \begin{align*}\label{Gammaeps}
X := \left\{\gamma \in C([0,1],W_0^{1,p}(\Omega)^G): \gamma(0)=0, J(\gamma(1))<0\right\}.
\end{align*}
Since $J (u)=J (|u|),$ it is possible to restrict on curves $\gamma\in C([0,1], \,\mathcal P).$
Defining
 % $$P:=\{u\in W_0^{1,p}(\Omega) \, :\, u_-\equiv 0\}, $$
 $$\mathcal P_{1/n}:=\{u\in W_0^{1,p}(\Omega)\, : \, \inf_{y\in \mathcal P}\|u-y\|_{W_0^{1,p}(\Omega)} < 1/n\}, $$
 equation (\ref{positivecone}) yields $u_n \in\mathcal P_{1/n}$ for large $n\in \mathbb N.$ By the Sobolev inequality, this implies  $\|(u_n)_-\|_{L^{p^*}(\Omega)}\rightarrow 0$, as $n\rightarrow \infty.$ See also [\cite{mercuriwillem}, p. 481]. \\
By Corollary \ref{PSJc} $i)$, using Theorem \ref{GlobalPositive} we can pick $\delta\in (0,c_\infty)$ such that $\{u_n\}_n$ is relatively compact. The restriction on $l\geq2$ allows us to have $c+\delta<l c_\infty.$ For such $\delta,$ the mountain-pass level $c$ is critical in $W_0^{1,p}(\Omega)^G,$ and by the symmetric criticality principle the duality can be extended to $W_0^{1,p}(\Omega).$ Hence, by construction, there exists a nontrivial nonnegative critical point of $J.$  The positivity follows from the strong maximum principle, and this concludes the proof.

\end{proof}
\begin{proof}[Proof of Theorem \ref{fixhol}]
The proof is the same as above, using Theorem \ref{GlobalPositive}. \\ For every closed subgroup $G\subset O(N)$ with $l=l(G)>l_0:=c_\infty^{-1}c(R_1,R_2),$ then $$c:=\inf_{u\in W_0^{1,p}(\Omega)^G\setminus \{0\}}\max_{t>0}J(tu)=\inf_{\mathcal N(\Omega)^G}J(u)\leq c(R_1,R_2)<l c_\infty.$$
By Corollary \ref{PSJc} $i)$, using Theorem \ref{GlobalPositive} we can pick a Palais-Smale sequence at level $c,$which is nearby the positive cone and relatively compact. Then $c$ is critical and there exists a positive $G$-symmetric solution $u,$ of (\ref{main eq}), such that $J(u)\leq c(R_1,R_2).$ This concludes the proof.
\end{proof}

\section{Existence of multiple sign-changing solutions}\label{proofsign}
 The mountain-pass principle for sign-changing solution, i.e. Theorem 3.7 in \cite{clapppacella}, holds in the Banach space $W^{1,p}_{0}(\Omega)^G.$ More precisely we have the following
\begin{lemma}\label{mmm}
Let $W$ be a finite dimensional subspace of $W_0^{1,p}(\Omega)^G$
and let $d := \sup_W J$. If $J$ satisfies $(PS)_c$ relative to $\Pi_0$ in $W_0^{1,p}(\Omega)^G$ (see Section \ref{GCsec}) for every $c \leq d$, then $J$ has at least $\textrm{dim}(W) - 1$
pairs of sign changing critical points $u \in W_0^{1,p}(\Omega)^G$ with $J (u) \leq d.$
\end{lemma}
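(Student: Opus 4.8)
The plan is to deduce this finite-dimensional sign-changing multiplicity result from an abstract critical point theorem based on a cohomological or genus-type index, adapted to the $\mathbb{Z}_2$-symmetry induced by the map $u\mapsto -u$, and localized away from the forbidden set $\Pi_0=\mathcal P\cup(-\mathcal P)\cup J^d$. The key structural observation is that $\Pi_0$ is precisely the set we must avoid: sign-changing functions are exactly those lying outside $\mathcal P\cup(-\mathcal P)$, and the $(PS)_c$ condition relative to $\Pi_0$ guarantees compactness of Palais-Smale sequences that stay a definite distance from the positive and negative cones. Since this is asserted to be the verbatim analogue of Theorem 3.7 in \cite{clapppacella}, the honest approach is to verify that every ingredient of that proof transfers from the Hilbert space $H^1_0(\Omega)^G$ to the reflexive Banach space $W^{1,p}_0(\Omega)^G$.

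First I would set up the deformation framework. On the open $\mathbb{Z}_2$-invariant set $U:=W^{1,p}_0(\Omega)^G\setminus(\mathcal P\cup(-\mathcal P))$ of sign-changing functions, I would construct a pseudo-gradient vector field for $J$ that is tangent to $U$ — i.e.\ that does not push trajectories into the cones — and that is odd with respect to $u\mapsto -u$. The relative $(PS)_c$ hypothesis (for all $c\le d$) provides, via the standard quantitative deformation lemma, an odd deformation of sublevel sets $J^c$ within $U$ whenever $c$ is not a critical value on $U$. The compactness needed to run the deformation on the whole relevant range is exactly what $(PS)_c$ relative to $\Pi_0$ supplies. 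One must check that the positive cone is ``invariant under the downward flow'' in the appropriate sense, so that a flow starting in $U$ stays in $U$; this uses that $\mathcal P_{1/n}$-type neighborhoods of the cones are attracting, which is the $p$-Laplacian analogue of the argument already invoked in the proof of Theorem \ref{coronn} to show $\|(u_n)_-\|_{L^{p^*}}\to 0$ near $\mathcal P$.

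Next I would invoke the index (genus-type) argument. Given the finite-dimensional subspace $W\subset W^{1,p}_0(\Omega)^G$ with $d=\sup_W J$, the sphere $S_W\cong S^{\dim W-1}$ carries a free $\mathbb{Z}_2$-action, and one intersects (or maps) it against the cones to produce a family of $\mathbb{Z}_2$-invariant sets of prescribed index located in $U\cap J^d$. A standard minimax scheme over index-$k$ classes then yields $\dim(W)-1$ distinct critical levels in $(\,\cdot\,,d]$, each realized by a sign-changing critical point; the drop from $\dim W$ to $\dim W-1$ is the usual cost of removing the two cone directions. Oddness of the flow ensures the critical points come in pairs $\pm u_k$, and because the construction is confined to $U$, each limit point is genuinely sign-changing rather than collapsing onto $\mathcal P$ or $-\mathcal P$.

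The main obstacle is the Banach-space deformation theory: unlike the $p=2$ Hilbert setting of \cite{clapppacella}, here $J$ is a $C^1$ functional on a non-Hilbertian reflexive space, so one cannot use the Riesz gradient and must build an odd, locally Lipschitz pseudo-gradient field that is simultaneously (i) tangent to $U$ and (ii) compatible with the $G$- and $\mathbb{Z}_2$-symmetries, while verifying the invariance of the cones under the resulting flow. The duality $J'(u)\in W^{-1,p'}(\Omega)$ and the nonlinear, non-homogeneous structure of $-\Delta_p$ make the cone-invariance estimate delicate; this is precisely where the relative $(PS)_c$ formulation and the Sobolev-based control of $\|u_-\|_{L^{p^*}}$ must be combined to keep the flow off $\Pi_0$. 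Once this deformation lemma is secured, the abstract minimax/index machinery is formally identical to the semilinear case and the conclusion follows.
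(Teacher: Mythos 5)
Your proposal is correct and follows essentially the same route as the paper: the paper's entire proof consists of citing [Theorem 3.7, \cite{clapppacella}] with the instruction to replace the Hilbert-space gradient flow by a pseudo-gradient flow of $J$, which is exactly the transfer you describe (odd pseudo-gradient field compatible with the cones, cone-invariant deformation, then the genus/minimax count giving $\dim(W)-1$ pairs). The Banach-space pseudo-gradient issue you flag as the main obstacle is precisely the one modification the paper identifies, so your elaboration matches the intended argument.
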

\begin{proof}
In [Theorem 3.7, \cite{clapppacella}] use the pseudo-gradient flow of $J.$
\end{proof}

\begin{proof}[Proof of Theorem \ref{coronn2}]Fix $\delta<\varepsilon_0,$ where $\varepsilon_0$ is given by Corollary \ref{PSJc}. From Lemma \ref{cal} and Lemma \ref{homog} there exist $\{\omega\}_{i=1,...,l+1}\subset \mathcal N(\Omega)^G$ and $R_\delta>0$ such that if $R_1/R_2<R_\delta$ then $$\max_{W_k} J\leq \sum_{i=1}^{k+1}\max_{t>0}J(t\omega_i)\leq (k+1)c_\infty+\delta<(l+1)c_\infty+\varepsilon_0,$$ where $W_k:=\textrm{span}\,(\omega_1,...,\omega_{k+1})$ for $k=1,...,l.$ Notice that $\{\omega\}_{i=1,...,l+1}$ are linearly independent since they have disjoint supports.\\ From Corollary \ref{PSJc} and Lemma \ref{mmm} there exist $l$ couples of sign-changing critical points of $J,$ namely $\pm u_1,...,\pm u_l\in \mathcal N(\Omega)^G$ with $$J(u_k)\leq (k+1)c_\infty+\delta,\quad k=1,...,l,$$ and this concludes the proof.
\end{proof}
\begin{proof}[Proof of Theorem \ref{fixhol2}]
Fix $m\in \mathbb N$ and $0<R_1<R_2.$ Define $l_0:=c_\infty^{-1}(m+1)c(R_1^{\frac{1}{m+1}},R_2^{\frac{1}{m+1}}).$ From Lemma \ref{cal} and Lemma \ref{homog} there exist $\{\omega\}_{i=1,...,m+1}\subset \mathcal N(\Omega)^G$ such that $$\max_{W_k} J\leq \sum_{i=1}^{k+1}\max_{t>0}J(t\omega_i)\leq (k+1)c(R_1^{\frac{1}{m+1}},R_2^{\frac{1}{m+1}})\leq l_0 c_\infty,$$ where $W_k:=\textrm{span}\,(\omega_1,...,\omega_{k+1})$ for $k=1,...,m.$ \\ For every closed subgroup $G\subset O(N)$ with $l=l(G)>l_0$, from Corollary (\ref{PSJc}) and Lemma \ref{mmm} there exist $m$ couple of sign-changing critical point of $J,$ namely $\pm u_1,...,\pm u_m\in \mathcal N(\Omega)^G$ with $$J(u_k)\leq (k+1)c(R_1^{\frac{1}{m+1}},R_2^{\frac{1}{m+1}}),\quad k=1,...,m,$$ and this concludes the proof.
\end{proof}

 \section{Proof of Theorem \ref{GlobalPositive} and Theorem \ref{GlobalSign}}\label{proofglo}
\begin{proof} [Proof of Theorem \ref{GlobalPositive}] We follow the line given by M. Willem in [\cite{Will}, Theorem 8.13], inspired by \cite{BCoron}. The result is a consequence of the observations of M. Clapp in \cite{clapp} on the semilinear case and the global compactness in \cite{mercuriwillem} for the p-Laplacian operator. A sketch of the proof is due, and we give it in several steps. \newline
1)  Since the sequence $\{u_n\}_n$ is bounded in $W^{1,p}_{0}(\Omega),$ passing if necessary to a subsequence, we can assume that $u_n\rightharpoonup v_0$ in $W^{1,p}_0(\Omega)$ and $u_n\rg v_0$ a.e. on $\Om$.  By [Lemma 3.5, \cite{mercuriwillem}] and the symmetric criticality principle, it follows that $\phi'(v_0)=0$ and $u^1_n:=u_n-v_0$ is such that
\begin{align*}
&i) \,\,\|u^1_n\|^p=\|u_n\|^p-\|v_0\|^p+o(1),\\
&ii)\,\, \phi_\infty(u^1_n)\rightarrow c-\phi(v_0),\\
&iii) \,\,\phi'_\infty(u^1_n)\rightarrow 0 \quad \textrm{in}\,\, W^{-1,p'}(\Omega) .
\end{align*}
2) If $u^1_n\rightarrow 0$ in $L^{p^*}(\Omega),$ since $\phi'_\infty(u^1_n)\rightarrow 0$ in $W^{-1,p'}_{0}(\Omega),$ we have that $u^1_n\rightarrow 0$ in $W^{1,p}_0(\Omega)$ and the proof is complete. Otherwise we can assume that
$$\int_{\Omega}|u^1_n|^{p^*}dx>\delta$$ for some $0<\delta<(S/2)^{N/p}.$ Introducing the L\'evy concentration function $$L_n(r):=\sup_{y\in \R^N}\int_{B(y,r)}|u^1_n|^{p^*}dx, $$ since $L_n(0)=0$ and $L_n(\infty)>\delta,$ there exists a sequence $\{\lambda^1_n\}_n\subset ]0,\infty[$ and   a sequence $\{y^1_n\}_n\subset \Omega$ such that
\begin{equation*}
\delta=\sup_{y\in \R^N}\int_{B(y,\lambda^1_n)}|u^1_n|^{p^*}dx=\int_{B(y^1_n,\lambda^1_n)}|u^1_n|^{p^*}dx.
\end{equation*}\\
3) Given a closed subgroup $H\subset G$ define $$\textrm{Fix}\,(H):= \{x\in \R^N: gx=x,\, \forall g\in H\}$$ and for every $y\in \R^N$ $y^H$ to be the euclidian projection of $y$ onto $\rm{Fix}\,(H).$
In [\cite{clapp}, p.121] is proved the existence of a closed subgroup $\Gamma \subset G$ such that, passing if necessary to a subsequence: \\
a) $\# \Gamma <\infty,$\\
\\
b) $G_{y_n^\Gamma}=\Gamma$ for all $n,$ \\
\\
c) $(\lambda_n^1)^{-1}|g y_n^{1\,\Gamma}-g'y_n^{1\,\Gamma}|\rightarrow\infty$ as $n\rightarrow \infty$ for all $[g]\neq[g']\in G /\Gamma,$ \\
\\
d)$(\lambda_n^1)^{-1}|y_n^1-y_n^{1\,\Gamma}|<C<\infty.$ \\
\\
We define on $$\Omega_n:=\frac{1}{\lambda^1_n}(\Omega-y_n^{1\,\Gamma})$$ the $\Gamma-$invariant sequence $v^1_n(x):=(\lambda^1_n)^{(N-p)/p}u^1_n(\lambda^1_n x+y_n^{1\,\Gamma}).$ We can assume that $v^1_n\rightharpoonup v_1$ in $\mathcal D^{1,p}(\R^N)^\Gamma$ and $v^1_n\rightarrow v_1$ a.e. on $\R^N.$ From d) we observe that
\begin{equation}\label{cont}
\delta=\int_{B(y_n^{1\,\Gamma},\lambda^1_n(1+C))}|u^1_n|^{p^*}dx=\int_{B(0,1+C)}|v^1_n|^{p^*}dx.
\end{equation}
4) We prove that $v_1\neq 0.$ Let $f_n:=(f^1_n,...,f^N_n) \in (L^{p'}(\Omega))^{N}$ defined by the representation $$(\phi'_\infty(u^1_n),h)=\sum^N_{i=1}\int_{\Omega}f^i_n \partial_i h dx,\quad \quad \forall h\in W^{1,p}_0(\Omega).$$ Define $g_n:=(\lambda^1_n)^{(N-p)/p}f_n(\lambda^1_n x+y_n^{1\,\Gamma}).$ We have that $$(\phi'_\infty(v^1_n),h)=\sum^N_{i=1}\int_{\Omega_n}g^i_n \partial_i h dx,\quad \quad \forall h\in W^{1,p}_0(\Omega_n)$$ and, since $\phi'_\infty(u^1_n)\rightarrow 0,$
$$\sum^N_{i=1}\int_{\Omega_n}|g^i_n|^{p'}dx=\sum^N_{i=1}\int_{\Omega}|f^i_n|^{p'}dx=o(1).$$ \newline
We assume now, by contradiction, that $v_1=0.$ Then, passing to a subsequence, we can assume that $v^1_n\rightarrow 0$ in $L^p_{loc}(\R^N).$ Take $h\in \mathcal D(\R^N)$ such that  $\textrm{supp} \,h\subset B(y,1)$ for some $y\in\R^N.$ From the H\"older and Sobolev inequalities, it follows that
 $$ \int|h|^p|v^1_n|^{p^*}\leq S^{-1}\Big ( \int_{\textrm{supp}\,h} |v^1_n|^{p^*}\Big)^{p/N}\int |\nabla(h v^1_n)|^{p}. $$
 Hence, since $v^1_n\rightarrow 0$ in $L^p_{loc}(\R^N),$ we have

 \begin{eqnarray}
 \int_{\Omega_n}|\nabla (h v^1_n)|^p
 &=&\int|\nabla v^1_n|^{p-2}\nabla v^1_n \nabla (|h|^p v^1_n)+o(1) \nonumber\\
 &=& \int |h|^p |v^1_n|^{p^*}+\sum^N_{i=1}\int_{\Omega_n} g^i_n \partial_i(|h|^p v^1_n)+o(1) \nonumber\\
 &\leq&  S^{-1} \delta^{p/N} \int |\nabla (h v^1_n)|^p+o(1) \nonumber\\
 &\leq& \frac{1}{2}\int |\nabla (h v^1_n)|^p+o(1). \nonumber
 \end{eqnarray}
It follows that $\nabla v^1_n\rightarrow 0$ in $L^{p}_{loc}(\R^N)$ and by the Sobolev inequality we have that $v^1_n\rightarrow 0$ in $L^{p^*}_{loc}(\R^N).$ This is in contradiction with (\ref{cont}). Hence $v_1\neq 0.$ \newline
\noindent 5) Since $\Omega$ is bounded we may assume $y_n^{1\,\Gamma}\rightarrow y_0^{1\,\Gamma} \in \bar{\Omega}$ and $\lambda^1_n\rightarrow \lambda^1_0\geq 0.$
If   $\lambda^1_0>0$ then, as a consequence of the fact that $u^1_n\rightharpoonup 0$ in $W^{1,p}_0(\Omega),$ we have
$v^1_n \rightharpoonup 0$ in $\mathcal D^{1,p}(\R^N)^\Gamma$ and this is a contradiction.
We can also rule out the case $\lambda^1_n\rightarrow 0$ and $$\liminf_{n\rightarrow\infty} \frac{1}{\lambda^1_n}\textrm{dist}\,(y_n^{1\,\Gamma},\partial \Omega)< \infty.$$
Indeed, we would have $y_0^{1\,\Gamma}\in \partial \Omega$ and $v_1,$ which is nonnegative,
would satisfy
\begin{eqnarray}
-\Delta_p u =   u^{p^*-1} &\textrm{in}& H, \nonumber \\
u =0 & \textrm{on} & \partial H, \nonumber
\end{eqnarray}
where $H$ is a halfspace. But [Theorem 1.1, \cite{mercuriwillem}] implies $v_1\equiv 0.$ \newline
It follows that, for some subsequence, $$\frac{1}{\lambda^1_n}\textrm{dist}\,(y_n^{1\,\Gamma},\partial \Omega)\rightarrow \infty, \quad \lambda^1_n\rightarrow 0. $$
Moreover $y_0^{1\,\Gamma}\in \Omega.$ By Step 1 and Lemma 3.6 in \cite{mercuriwillem} we have that $\phi'_\infty(v_1)=0$ and  by the strong maximum principle $v_1>0$.\\
6) Arguing as in [\cite{clapp}, p.124] pick a radial cut-off function $\chi\in \mathcal D(\R^N),\, 0\leq\chi\leq 1$ such that $\chi\equiv 1$ on $B(0,1)$ and $\chi\equiv 0$ on $\R^N\setminus B(0,2).$ Define also $$4\rho_n:=\min \{|g y_n^{1\,\Gamma}-g'y_n^{1\,\Gamma}|\,:\,[g]\neq[g']\in G /\Gamma\}.$$ From Step 3 and from property c) together with Lemma 3.6 in \cite{mercuriwillem} the sequence

$$u^2_n(x):=u^1_n(x)-\sum_{[g]\in G/\Gamma}(\lambda^1_n)^{(p-N)/p}v_1(g^{-1}\lambda_n^{-1}(x-g y_n^{1\,\Gamma})/\lambda^1_n)\chi(\rho_n^{-1}(x-gy_n^{1\,\Gamma}))\in W^{1,p}_0(\Omega)^G$$ satisfies
\begin{align*}
&i) \,\,\|u^2_n\|^p =\|u_n\|^p-\|v_0\|^p-|G/\Gamma|\|v_1\|^p+o(1),\\
&ii)\,\, \phi_\infty(u^2_n)\rightarrow c-\phi(v_0)-|G/\Gamma|\phi_\infty(v_1),\\
&iii) \,\,\phi'_\infty(u^2_n)\rightarrow 0 \quad \textrm{in}\,\, (W^{1,p}_{0}(\Omega))'.
\end{align*}

\noindent 7)  Since for any nontrivial critical point $u$ of $\phi_\infty$ we have $$S\|u\|^{p}_{L^{p^*}(\R^N)}\leq \|u\|^p=\|u\|^{p^*}_{L^{p^*}(\R^N)},$$ we get $$\phi_\infty(u)\geq c^*:= \frac{S^{N/p}}{N}.$$ As a consequence, the above procedure can be iterated only for a finite number of steps.

\end{proof}
\begin{proof}[Proof of Theorem \ref{GlobalSign}]
The proof requires small changes with respect to the proof of Theorem \ref{GlobalPositive}, taking into account that the nonexistence of sign-changing solutions in the halfspace is not known. Therefore we leave it out.
\end{proof}
\section{An extension}\label{ext}
In the proof of the above existence results it is remarkable the role played by the positive solutions constructed in a hierarchy of annular domains, and  belonging to the Nehari manifold. The results we obtained can be furthermore extended, in order to enlarge the class of domains $\Omega$ having a finite number of symmetries and nontrivial topology. Indeed, as observed in \cite{clappfaya}, Theorem \ref{fixhol} and Theorem \ref{fixhol2} require $l$ large enough, and this could not be the case in odd dimensions.  Following \cite{clappfaya}, we denote by $\Gamma$ a closed subgroup of $O(N)$ and by $D\subset \R^N$ a $\Gamma$-invariant smooth domain such that every orbit $\Gamma x$ is infinite on $D.$ We have the following
\begin{theorem}
Let $1<p<N.$  Then, there exists an increasing sequence $\{l_m\}$ of positive real numbers, depending on $p, \Gamma, D,$ such that if $D\subset\Omega,$ and if $\Omega$ is $G$-invariant, being $G$ a closed subgroup of $\Gamma$ such that $$\min_{x\in \Omega}\#Gx>l_m, $$
then (\ref{main eq}) possesses $m$ pairs of $G$-symmetric solutions $\pm u_1,..., \pm u_m$ with $u_1$ positive and $u_2,...,u_m$ sign-changing, and $$J(u_k)\leq l_k\frac{S^{N/p}}{N}, \quad k=1,...,m.$$
\end{theorem}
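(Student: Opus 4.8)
The plan is to reproduce the scheme of Theorem \ref{fixhol} and Theorem \ref{fixhol2}, replacing the concentric annuli $A_{r_i,r_{i-1}}$ and the radial ($O(N)$-symmetric) least energy solutions of Lemma \ref{cal} by a hierarchy of building blocks that are $\Gamma$-invariant inside $D$. The key geometric input is that every $\Gamma$-orbit on $D$ is infinite. Following \cite{clappfaya}, I would first slice $D$, by means of a $\Gamma$-invariant function, into a sequence of pairwise disjoint nonempty $\Gamma$-invariant smooth subdomains $D_1,D_2,\dots\subset D$, and set $b_i:=\inf\{J(u)\,:\,u\in\mathcal N(D_i)^\Gamma\}$. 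The crucial point is that each $b_i$ is attained: a minimizing Palais--Smale sequence in $W^{1,p}_0(D_i)^\Gamma$ is bounded (on the Nehari manifold $J=\tfrac1N\|\cdot\|^p$), and applying Theorem \ref{GlobalSign} with $G=\Gamma$ and $a\equiv0$, any bubble would have to concentrate along a subgroup $\Gamma_j$ of \emph{finite} index in $\Gamma$; but the corresponding orbit $|\Gamma/\Gamma_j|=\#\Gamma y^j_n=\infty$, since all $\Gamma$-orbits on $D\supset D_i$ are infinite. Hence no bubbling occurs, the sequence is relatively compact, and $b_i$ is attained by a nonnegative $\Gamma$-invariant function $\omega_i$; the strong maximum principle \cite{Vaz} gives $\omega_i>0$ in $D_i$ and the regularity is as in Lemma \ref{cal}. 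Since $D_i\subset D\subset\Omega$ and $G\subset\Gamma$, extension by zero gives $\omega_i\in\mathcal N(\Omega)^G$, and the $\omega_i$ are linearly independent, having disjoint supports.

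Next I would fix the thresholds by setting $l_k:=c_\infty^{-1}\sum_{i=1}^{k}b_i$, a strictly increasing sequence depending only on $p,\Gamma,D$. Because the $\omega_i$ lie on the Nehari manifold and have disjoint supports, for $W_k:=\mathrm{span}(\omega_1,\dots,\omega_k)$ one has
$$
\max_{W_k}J\le\sum_{i=1}^{k}\max_{t>0}J(t\omega_i)=\sum_{i=1}^{k}J(\omega_i)=\sum_{i=1}^{k}b_i=l_kc_\infty .
$$
I would then establish the Palais--Smale statement adapted to this setting, in exact analogy with Corollary \ref{PSJc}: if $G\subset\Gamma$ is closed with $\min_{x\in\Omega}\#Gx>l_m$, then $J$ satisfies $(PS)_c$ in $W^{1,p}_0(\Omega)^G$, and a fortiori relative to $\Pi_0$, for every $c\le l_mc_\infty$. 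Indeed, in Theorem \ref{GlobalSign} every nontrivial bubble $v_i$ (whether modelled on $\R^N$ or on a half-space, and regardless of sign) satisfies $\phi_\infty(v_i)\ge c_\infty$ and enters the energy identity with multiplicity $|G/\Gamma_i|=\#Gy^i_n\ge\min_{x\in\Omega}\#Gx>l_m$; here the relevant orbit is that of the interior points $y^i_n\in\Omega$, not of the limit $y^i\in\bar\Omega$. Since $\phi(v_0)\ge0$, a single bubble already forces $c>l_mc_\infty$, a contradiction, so $k=0$ and the sequence converges. This is precisely where the large-orbit hypothesis circumvents the half-space bubbles which in general cannot be excluded for sign-changing sequences (cf. the Remark following Corollary \ref{PSJc}).

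With these two ingredients the solutions are produced as before. For the positive solution $u_1$ I would run the mountain-pass scheme of Theorem \ref{fixhol}: since $\inf_{\mathcal N(\Omega)^G}J\le J(\omega_1)=b_1=l_1c_\infty<l_mc_\infty$, a Palais--Smale sequence nearby the positive cone is relatively compact by Theorem \ref{GlobalPositive} together with the compactness just obtained, giving a nonnegative critical point with $J(u_1)\le l_1c_\infty$, positive by the strong maximum principle. For the sign-changing solutions I would apply Lemma \ref{mmm} to the nested spaces $W_k$: for each $k=2,\dots,m$ the space $W_k$ has dimension $k$ and $\sup_{W_k}J\le l_kc_\infty\le l_mc_\infty$, so $(PS)_c$ relative to $\Pi_0$ holds for every $c\le\sup_{W_k}J$ and Lemma \ref{mmm} yields at least $k-1$ pairs of sign-changing critical points below $l_kc_\infty$; ordering the sign-changing critical values produces $u_2,\dots,u_m$ with $J(u_k)\le l_kc_\infty$. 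The symmetric criticality principle transfers criticality from $W^{1,p}_0(\Omega)^G$ to $W^{1,p}_0(\Omega)$, which closes the argument.

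The step I expect to be the main obstacle is the building-block construction of the first paragraph in the general, non-radial symmetric setting. One must genuinely produce infinitely many disjoint $\Gamma$-invariant smooth subdomains $D_i\subset D$ retaining the infinite-orbit property, and one must ensure that the resulting compactness of $W^{1,p}_0(D_i)^\Gamma\hookrightarrow L^{p^*}(D_i)$—which I extract from Theorem \ref{GlobalSign} through the finite-index obstruction rather than from a Strauss-type inequality—delivers minimizers with the $\mathcal C^{1,\alpha}$ regularity needed to place them on $\mathcal N(\Omega)^G$. The remaining steps are formally identical to the proofs of Theorems \ref{fixhol} and \ref{fixhol2}, the only new point being that the counting $\#Gy^i_n>l_m$ now replaces the bound $l(G)>l_0$; this is exactly what makes the result available in odd dimensions, where large finite subgroups of $O(N)$ with big minimal orbit on $\R^N\setminus\{0\}$ may fail to exist.
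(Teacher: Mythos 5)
Your proposal is correct and takes essentially the same route as the paper, whose entire proof consists of invoking \cite{clappfaya} with ``obvious modifications'' using the compactness results of Section \ref{GCsec} and Lemma \ref{mmm}: your scheme --- ground states on disjoint $\Gamma$-invariant subdomains of $D$, exclusion of bubbles via the contradiction between the finite-index isotropy $G_{y^i_n}=\Gamma_i$ in Theorems \ref{GlobalPositive}--\ref{GlobalSign} and the infinite-orbit hypothesis, thresholds $l_k$ as cumulative energies, and the mountain-pass plus Lemma \ref{mmm} conclusion --- is precisely that adaptation, consistent with the paper's remark that Theorems \ref{fixhol} and \ref{fixhol2} are the case $D=A_{R_1,R_2}$, $\Gamma=O(N)$. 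The slicing step you flag as the main obstacle is in fact harmless: since $\Gamma\subset O(N)$, the shells $\{x\in D\,:\,a_i<|x|<a_{i+1}\}$ are $\Gamma$-invariant, pairwise disjoint, and inherit the infinite-orbit property, and each can be shrunk to a smooth $\Gamma$-invariant subdomain.
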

\begin{proof}
The proof follows with obvious modifications from \cite{clappfaya}, taking into account the compactness results of Section \ref{GCsec} and Lemma \ref{mmm}.
\end{proof}
\begin{rem}
Theorem \ref{fixhol} and Theorem \ref{fixhol2} correspond to the case $D=A_{R_1,R_2}$ and $\Gamma=O(N).$
\end{rem}

\subsection*{Acknowledgments}
  C. M. would like to thank the Department of Mathematics "Guido Castelnuovo" (Universit\'a di Roma "Sapienza") for the kind hospitality when this paper has been started.

%\begin{thebibliography}{10}
%
%\bibitem{smale} F. Cucker and S. Smale,
% \newblock {\em On the mathematical foundations of learning}.
%\newblock Bull. Amer. Math. Soc. (N.S.) {\bf 39} (2002), no. 1, 1�-49
%
%
%
%\end{thebibliography}

\end{document}